\DeclareMathAlphabet{\pazocal}{OMS}{zplm}{m}{n}
\newtheorem{theorem}{Theorem}[section]
\newtheorem{lemma}{Lemma}[section]
\newtheorem{remark}{Remark}[section]
\newcommand{\bld}[1]{\hbox{\boldmath$#1$}}    
\newcommand{\Th}{\mathcal{T}_h}
\newcommand{\Eh}{\mathcal{E}_h}
\newcommand{\jmp}[1]{[\![#1 ]\!]}
\newcommand{\bldu}{\bld{u}}
\newcommand{\bldv}{\bld{v}}
\newcommand{\blduhat}{\widehat{\bld{u}}}
\newcommand{\bldvhat}{\widehat{\bld{v}}}
\newcommand{\lineu}{\underline{\bld{u}}}
\newcommand{\linev}{\underline{\bld{v}}}
\newcommand{\bldV}{\bld{V}}
\newcommand{\matrixU}{\mathbf{U}}
\newcommand{\matrixV}{\mathbf{V}}
\newcommand{\matrixlineU}{\mathbf{\underline{U}}}
\newcommand{\matrixlineV}{\mathbf{\underline{V}}}
\newcommand{\matrixP}{\mathbf{P}}
\newcommand{\matrixQ}{\mathbf{Q}}
\newcommand{\matrixUhat}{\mathbf{\widehat{U}}}
\newcommand{\Vhat}{\widehat{V}}
\newcommand{\bldVhat}{\widehat{\bld{V}}}
\newcommand{\lineV}{\underline{\bld{V}}}
\newcommand{\poly}{\mathcal{P}}
\newcommand{\matrixA}{\mathbf{A}}
\newcommand{\matrixB}{\mathbf{B}}
\newcommand{\matrixC}{\mathbf{C}}
\newcommand{\matrixD}{\mathbf{D}}
\newcommand{\trans}{\mathrm{T}}
\renewcommand*\env@matrix[1][\arraystretch]{%
	\edef\arraystretch{#1}%
	\hskip -\arraycolsep
	\let\@ifnextchar\new@ifnextchar
	\array{*\c@MaxMatrixCols c}}
\newcommand\blue[1]{{{#1}}}
\newcommand\revise[1]{{#1}}
\begin{document}
\title[SADDLE-HDG]{Uniform block-diagonal preconditioners for 
divergence-conforming HDG Methods for the generalized Stokes equations and the linear elasticity equations}

\author{Guosheng Fu}
\address{Department of Applied and Computational Mathematics and 
Statistics, University of Notre Dame, USA.}
\email{gfu@nd.edu}
\thanks{We gratefully acknowledge the partial support of this work
	by the U.S. National Science Foundation through grant DMS-2012031.}
\author{Wenzheng Kuang}
\address{Department of Applied and Computational Mathematics and 
	Statistics, University of Notre Dame, USA.}
\email{wkuang1@nd.edu}

\keywords{Divergence-conforming HDG, block-diagonal
preconditioner, saddle point problem, linear elasticity, generalized Stokes}
\subjclass{65N30, 65N12, 76S05, 76D07}
\begin{abstract}
	We propose a uniform block-diagonal preconditioner for condensed $H$(div)-conforming HDG schemes for 
	parameter-dependent saddle point problems, including the generalized Stokes
	equations and the linear elasticity equations. 
	An optimal preconditioner is obtained for the stiffness matrix on the
	global velocity/displacement space via the auxiliary space preconditioning (ASP) technique \cite{Xu96}. 
	\revise{A spectrally equivalent approximation to the Schur complement on the element-wise constant pressure space is also constructed, and an explicit computable exact inverse is obtained via the Woodbury matrix identity.} Finally, the numerical results verify the robustness of our proposed preconditioner with respect to model parameters and mesh size.
\end{abstract}
\maketitle

\section{Introduction}
\label{sec:intro}
Since their first introduction for second order elliptic problems about a decade ago \cite{CockburnGopalakrishnanLazarov09}, hybridizable discontinuous Galerkin (HDG) schemes have been developed and successfully applied to various partial differential equations (PDEs) in computational fluid dynamics \cite{cockburn2018discontinuous,qiu2016superconvergent}, wave propagation \cite{cockburn2016hdg}, and continuum mechanics \cite{nguyen2012hybridizable,fu2021locking}. One key advantage of HDG schemes over discontinuous Galerkin (DG) schemes is that they can be statically condensed into a reduced linear system with increased sparsity, resulting in a significant decrease in the matrix size and computing cost \cite{NguyenPeraireCockburnConvDiff09, Cockburn16}.

\blue{
However, the challenge of constructing optimal and robust solvers and preconditioners for condensed HDG schemes has not been fully addressed, and techniques such as multigrid and domain decomposition methods have been mainly explored.
Cockburn et al. introduced a V-cycle geometric multigrid method for HDG schemes for the elliptic equations\cite{CockburnDubois14}, where a continuous element-wise linear function space is used at the second level, and then a standard conforming multigrid method starts from there. A similar idea was employed for the Helmholtz equations \cite{chen2014robust}, the shallow water equations\cite{betteridge2021multigrid}, and extended to an $hp$-multigrid in parallel manner in \cite{fabien2019manycore}. We note that Lu et al. recently reported a homogeneous multigrid for HDG schemes for the elliptic equations where HDG discretization is used in each level \cite{lu2020hmg}.
Standard $p$-version domain decomposition methods were first analyzed by Sch\"oberl and Lehrenfeld for statically condensed systems of high order HDG schemes for the elliptic equations where each element is treated as a sub-domain \cite{schoberl2013domain}. Schwarz type methods and balancing domain decomposition with constraints (BDDC) algorithms for HDG schemes have been further investigated for the elliptic equations\cite{gander2015analysis ,gander2018analysis}, the incompressible Stokes equations\cite{barrenechea2019hybrid, tu2020analysis}, the Maxwell equations\cite{li2014hybridizable, he2016optimized} and the hyperbolic equations\cite{muralikrishnan2017ihdg, muralikrishnan2018improved}.


In this paper, we focus on a uniform preconditioner for the divergence-conforming HDG schemes for the parameter-dependent saddle point problems which arise when dealing with the mixed finite element formulation of the generalized Stokes equations, the linear elasticity equations, and the Brinkman equations. $H$(div)-conforming HDG discretizations for these equations have been developed in previous works \cite{LehrenfeldSchoberl16, fu2021locking, fu2019parameter} for the pressure-robust and mass-conserving properties but without efficient solvers. A wealth of literature is devoted to solving saddle point problems, and we refer to \cite{benzi2005numerical} for a comprehensive review of methods, including block-diagonal preconditioners, domain decomposition methods, multilevel methods, and so on. We note that recently block-factorization preconditioners were also proposed for a different HDG scheme for the incompressible Stokes equations\cite{rhebergen2018preconditioning, rhebergen2021preconditioning}. However, robust preconditioners for the $H$(div)-conforming HDG schemes for the generalized parameter-dependent saddle point problems have not been addressed in the literature yet.


 \revise{Block-diagonal preconditioners have been well-established for the stabilized Stokes equations, the linear elasticity equations\cite{wathen1993fast, silvester1994fast}, and the generalized Stokes equations\cite{bramble1997iterative, peters2005fast}. For the generalized Stokes equations, the Schur complement preconditioner that is robust with respect to mesh size and time step was first proposed in \cite{cahouet1988some}, then theoretical proved in the finite element setting in \cite{bramble1997iterative} and in continuous setting in \cite{kobelkov2000effective}.} 
 We also refer to the surveys in\cite{mardal2011preconditioning, pestana2015natural} for the generalized framework and analysis of block-diagonal preconditioners for the saddle point problems.}
 Here, we present a uniform block-diagonal preconditioner for the condensed $H$(div)-conforming HDG schemes for the parameter-dependent saddle point problems, including the generalized Stokes equation and the linear elasticity. The key idea is to find robust approximations for the matrix inverses of the symmetric positive definite (SPD) stiffness matrix on the global velocity/displacement space and the (negative) Schur complement on the element-wise constant pressure space in the statically condensed system. For the stiffness matrix on the global velocity/displacement space, we continue the work in our previous study on the reaction-diffusion equations \cite{fu2021uniform} and construct an optimal auxiliary space preconditioner (ASP) based on the theory proposed by Xu in \cite{Xu96}. For the Schur complement on the element-wise constant pressure space, we mainly borrow ideas from \cite{Mardal04, Olshanskii06}. \revise{We define a parameter-dependent norm on the element-wise constant pressure space, then we prove the spectral equivalence between the newly defined norm and the one induced by the (negative) Schur complement, which is robust with respect to model parameters and mesh size. It needs to be pointed out that elliptic regularity is assumed for the domain here. Next, through variational analysis, we construct an explicit matrix formulation corresponding to the definition of the newly defined norm, and naturally conclude that its inverse is robust Schur complement preconditioner. The efficient computation of this Schur complement preconditioner is realized via the Woodbury matrix identity\cite{higham2002accuracy}.} The numerical experiments verify the robustness of the preconditioner.

The rest of the paper is organized as follows. In Section \ref{sec:prelimi}, we introduce the $H$(div)-conforming HDG scheme for the generalized parameter-dependent saddle point problems and express the static condensation process in matrix formulation. In Section \ref{sec:precondition}, block-diagonal preconditioners that are robust with respect to model parameters and mesh size are constructed for the statically condensed system. Numerical examples based on the generalized Stokes equations and the linear elasticity equations are then presented in Section \ref{sec:num} to verify the robustness of our proposed preconditioner, and we conclude in Section \ref{sec:conclude}.

\section{$H$(div)-conforming HDG for the Model Problem}
\label{sec:prelimi}
\subsection{Notations and finite element spaces}
We assume the domain $\Omega\in\mathbb{R}^{d}$, with $d=2,3$, to be convex polygonal/polyhedral. Let $\mathcal{T}_h$ be a shape-regular, quasi-uniform, conforming simplicial triangulation of the domain $\Omega$. 
For each element $K\in\Th$ we denote by $h_K$ its diameter, and by $h$ the maximum diameter on the mesh $\Th$. We denote $\Eh$ as the set of facets of the mesh $\Th$, which we also refer to as the \textit{mesh skeletons}. 
We split $\Eh$ into boundary facets $\Eh^\partial:=\{F\in\Eh:F\subset\partial\Omega\}$, and interior facets $\Eh^o:=\Eh\backslash\Eh^\partial$. 
Given any facet $F\in\Eh$ with normal direction $\bld{n}$, we denote $\mathsf{tang}(\bld w):=\bld{w}-(\bld{w}\cdot\bld{n})\bld{n}$ as the tangential component and $\jmp{\bld w}$ as the jump on two adjacent element of a vector field $\bld{w}$. 
Given a simplex $S\subset\mathbb{R}^d$, with $d=1,2,3$, we denote $\mathcal{P}^m(S)$ ,$m\ge 0$, as the space of polynomials of degree at most $m$ on $S$. For any function in $L^2(S)$, we denote $(\cdot,\cdot)_S$ as the $L^2$ inner product if $S\in\Th$, or $\langle\cdot,\cdot\rangle_S$ if $S\in\Eh$, we denote $\|\cdot\|_S$ as the corresponding $L^2$ norm on the simplex $S$. For any functions in $L^2(\Omega)$, we denote $(\cdot,\cdot)_{\Th}:=\sum_{K\in\Th}(\cdot,\cdot)_K$ as the discrete $L^2$ inner product on the whole domain $\Omega$ and $\|\cdot\|_{\Th}:=(\cdot,\cdot)_{\Th}^{1/2}$ as the corresponding norm. For any functions in $L^2(\Eh)$, We denote $\langle\cdot,\cdot\rangle_{\Eh}:=\sum_{F\in\Eh}\langle\cdot,\cdot\rangle_F$ as the discrete $L^2$ inner product on the mesh skeletons $\Eh$ and $\|\cdot\|_{\Eh}:=(\cdot,\cdot)_{\Eh}^{1/2}$ as the corresponding norm. For $A,B\in\mathbb{R}^+$, we write $A\lesssim B$ to indicate there exists a positive constant $C$ such that $A\le CB$, with $C$ only dependent on shape regularity of the mesh $\Th$ and the polynomial degree of the finite element spaces. Furthermore, we denote $A\simeq B$ when $A\lesssim B$ and $B\lesssim A$.

The following finite element spaces are used to construct the divergence-conforming HDG scheme for the model problem:
	\begin{alignat*}{2}
		&\bldV_h^k&&:=\;
        \left\{\bld v\in H(\mathrm{div};\Omega):\;\;\bld v|_K\in[\poly^k(K)]^d,\;\forall K\in\Th\right\},
        \\
		&\bldV_{h,0}^k&&:=\;
		\left\{\bld v \in \bldV_h^k:\;\;\bld v\cdot\bld n|_F=0, \;\forall F\in\Eh^\partial\right\},
		\\
		&\bldVhat_h^k&&:=\;
		\left\{
		\begin{array}{lll}
			\bldvhat\in [L^2(\Eh)]^3:\;\;\bldvhat|_F\in[\poly^0(F)]^3\oplus\bld{x}\times[\poly^0(F)]^3, \;&\forall F\in\Eh, & \text{if $k=1$ and $d=3$,}
			\\
			\bldvhat\in [L^2(\Eh)]^d:\;\;\bldvhat|_F\in[\poly^k(F)]^d, \;\bldvhat\cdot\bld{n}|_F=0, \;&\forall F\in\Eh, & \text{else,}
		\end{array}
		\right.
		\\
		&\bldVhat_{h,0}^k&&:=\;
		\left\{\bldvhat\in\bldVhat_h^k:\;\;\mathsf{tang}(\bldvhat)|_F=0, \;\forall F\in\Eh^{\partial}\right\},
		\\
		&Q_h^k&&:=\;
		\left\{q\in L^2(\Omega):\;\;q|_K\in\poly^k(K), \;\forall K\in\Th\right\},\\
		&Q_{h,0}^k&&:=\;
		\left\{q\in Q_h^k:\;\;\int_{\Omega}q\;\mathrm{dx}=0\right\},
	\end{alignat*}
where $k$ is the polynomial degree, and $d$ the dimension of the domain $\Omega$.  Next, we perform a hierarchical basis splitting for the Brezzi-Douglas-Marini (BDM) finite element space $\bld{V}_h^k$ as was done in \cite[Section 2.2.4]{Lehrenfeld10} to facilitate our analysis of static condensation of the $H$(div)-conforming HDG scheme:
\begin{alignat*}{3}
	&\bldV_h^k&&=
	\;\;\;\;\bldV_h^{k,\partial}\oplus\bldV_h^{k,o},
	\\
	&\bldV_h^{k,\partial}&&:=
	\bigoplus_{\begin{subarray}{c}
			F\in\Eh\\{i=1,\dots,n_F^k}
		\end{subarray}}\mathrm{span}\{\phi_F^i\}&&\oplus
	\;\;\bigoplus_{F\in\Eh}\mathrm{span}\{\phi^0_F\},
	\\
	&\bldV_h^{k,o}&&:=
	\bigoplus_{\begin{subarray}{c}
			K\in\Th\\i=1,\dots,n_K^{k,1}
	\end{subarray}}\mathrm{span}\{\phi_{K}^{i}\}&&\oplus
	\bigoplus_{\begin{subarray}{c}
			K\in\Th\\i=1,\dots,n_K^{k,2}
	\end{subarray}}\mathrm{span}\{\psi_K^i\},
\end{alignat*}
where $\bldV_h^{k,\partial}$ and $\bldV_h^{k,o}$ are \textit{global} and \textit{local} subspaces of $\bldV_h^k$; $\phi_F^0$ is the basis of the lowest order Raviart-Thomas ($\mathrm{RT0}$) space on the facet $F$; $\phi_F^i$, $i=1,\dots,n_F^k$, is the higher order basis of the divergence-free facet bubbles with normal component only supported on the facet $F$; $\phi_K^i$, $i=1,\dots,n_K^{k,1}$, is the higher order basis of the divergence-free bubbles on the element $K$ with zero normal component on $\Eh$; and $\psi_K^{i}$, $i=1,\dots n_K^{k,2}$, is the higher order basis on the element $K$ with zero normal component on $\Eh$ and nonzero divergence. The integers $n_F^k,n_K^{k,1},n_K^{k,2}$ are denoted as the number of basis functions of each corresponding group per facet/element. We also split $Q_h^k$ into element-wise constant space and its complement:
\begin{alignat*}{3}
	&Q_h^k&&=\;\;
	&&\overline{Q}_h\oplus Q_h^{k,o},\\
	&\overline{Q}_h&&:=\;\;
	&&Q_h^0,\\
	&Q_h^{k,o}&&:=\;\;
	&&\left\{q\in Q_h^k:\;\;(q,\;1)_K=0,\;\forall K\in\Th\right\}.
\end{alignat*}
Through the above space splitting, we have:
\begin{subequations}
	\label{spaceRelation}
\begin{align}
	\nabla\cdot\bldV_h^{k,\partial} &= \overline{Q}_h,\\
	\nabla\cdot\bldV_h^{k,o} &= Q_h^{k-1,o},
\end{align}
\end{subequations}
which we refer to\cite[Section 2.2.4]{Lehrenfeld10} for details.  

\subsection{Model problem and the HDG scheme}
We consider the following saddle point problem: Find $\bld u$ and $p$ such that
\begin{equation}
	\label{model}
	\left.
	\renewcommand{\arraystretch}{1.5} 
		\begin{tabular}{r r r}
			$\nabla\cdot(-2\mu\bld{D}(\bld{u}))+\tau\bld{u}+\nabla p$ & $=$ & $\bld{f}$ \\
			$\nabla\cdot\bld{u}+\frac{1}{\lambda}p$ & $=$ & $0$
		\end{tabular}
	\right\} \;\text{in}\; \Omega,\;\bld{u}|_{\partial\Omega}=\bld{0},
\end{equation}
where $\bld{D}(\bld{u}):=\frac{1}{2}(\nabla\bld{u}+\nabla\bld{u}^T)$ is the symmetric gradient,
$\mu,\lambda>0$, $\tau\ge 0$ are constant model parameters and $\bld{f}$ is the source term. We note that this model covers the linear elasticity problem where $\tau\ge0$ is the inverse of time step size, $\mu,\lambda>0$ are Lam\'e parameters, with $\bldu$ representing the displacement and $p$ the pressure. It also covers the generalized Stokes problem where $\lambda=+\infty$, $\mu>0$ is the viscosity and $\tau\ge0$ is the inverse of time step size, where $\bldu$ represents the fluid velocity and $p$ the pressure.

To simplify our analysis, we focus on this constant-coefficient problem with homogeneous Dirichlet boundary conditions, while other standard boundary conditions are covered in our numerical experiments in Section \ref{sec:num}.

The equation \eqref{model} is discretized using the symmetric interior penalty 
divergence-conforming HDG (SIP-divHDG) method with projected jumps \cite[Remark 1.2.4]{Lehrenfeld10}. To further simplify our notation, we define a compound finite element space $\lineV_{h,0}^k:=\bldV_{h,0}^k\times\bldVhat_{h,0}^{k-1}$ and its elements $\linev_h:=(\bld{v}_h,\bldvhat_h)$. Then, the weak formulation of the divergence-conforming HDG scheme with polynomial degree $k\ge 1$ is given as below: Find $(\lineu_h,p_h)\in\lineV_{h,0}^k\times Q_{h,0}^{k-1}$ such that
\begin{equation}
	\label{weakForm}
	\left.
	\renewcommand{\arraystretch}{1.5} 
	\begin{tabular}{r l}
		$a(\lineu_h,\linev_h)+b(p_h,\bld{v}_h)=$ &
		$\left(\bld{f},\bld{v}_h\right)_{\Th}$,\\
		$b(\bld{u}_h,q_h)+c(p_h,q_h)=$ & $0$,
	\end{tabular}
	\right\}\forall\;\;(\linev_h,q_h)\in\lineV_{h,0}^{k}\times Q_{h,0}^{k-1},
\end{equation}
where the bilinear forms are defined as:
\begin{alignat*}{1}
	a(\lineu_h,\linev_h)&:=
	\sum_{K\in\Th}\tau(\bldu_h,\bldv_h)_K+2\mu\text{\Large$($}(\bld{D}(\bldu_h):\bld{D}(\bldv_h))_K-\langle\bld{D}(\bldu_h)\bld{n},\mathsf{tang}(\bldv_h-\bldvhat_h)\rangle_{\partial K}
	\\
	&-\langle\bld{D}(\bldv_h)\bld{n},\mathsf{tang}(\bldu_h-\blduhat_h)\rangle_{\partial K}
	+\langle\frac{\alpha k^2}{h}P_{k-1}(\mathsf{tang}(\bldu_h-\blduhat_h)),P_{k-1}(\mathsf{tang}(\bldv_h-\bldvhat_h))\rangle_{\partial K}\text{\Large$)$},
	\\
	b(p_h,\bldu_h)&:=
	-\sum_{K\in\Th}(p_h,\nabla\cdot\bldu_h)_K,
	\\
	c(p_h,q_h)&:=
	-\frac{1}{\lambda}\sum_{K\in\Th}(p_h,q_h)_K,
\end{alignat*}
where $P_{k-1}$ denotes the $L^2$-projection onto $\bldVhat_{h,0}^{k-1}$, $\alpha>0$ is the stabilization parameter to ensure the following coercivity result:
\begin{equation*}
	\label{coercivity}
	a(\lineu_h,\lineu_h)
	\gtrsim
	\sum_{K\in\Th}\tau\|\bldu_h\|_K^2+2\mu\left(\|\bld{D}(\bldu_h)\|_K^2+\frac{1}{h}\|{\mathsf{tang}}(\bldu_h-\blduhat_h)\|_{\partial K}^2\right),
\end{equation*}
for all $\lineu_h\in\lineV_{h,0}^k$. We refer to \cite{AinsworthFu18} for a detailed discussion of the lower bound of $\alpha$. By using the Cauchy-Schwarz inequality and the inverse inequality, it is also easy to verify that on the finite element space $\lineV_{h,0}^k$,
\begin{equation*}
	a(\lineu_h,\lineu_h)\lesssim
	\sum_{K\in\Th}\tau\|\bldu_h\|_K^2+2\mu\left(\|\bld{D}(\bldu_h)\|_K^2+\frac{1}{h}\|{\mathsf{tang}}(\bldu_h-\blduhat_h)\|_{\partial K}^2\right).
\end{equation*}
We refer to \cite{fu2021locking} for more details of the coercivity and boundedness results. Therefore, $a(\lineu_h,\lineu_h)$ defines a norm on the finite element space $\lineV_{h,0}^k$:
\begin{equation}
	\label{aNorm}
	a(\lineu_h,\lineu_h)\simeq|\!|\!|\lineu_h|\!|\!|_{\ast,h}^2,
\end{equation}
where $|\!|\!|\lineu_h|\!|\!|_{\ast,h}^2:=\tau\|\bldu_h\|^2_{\Th}+2\mu|\!|\!|\lineu_h|\!|\!|_{1,h}^2$, and $|\!|\!|\lineu_h|\!|\!|_{1,h}^2:=\sum\limits_{K\in\Th}\|\bld{D}(\bldu_h)\|_K^2+\frac{1}{h}\|{\mathsf{tang}}(\bldu_h-\blduhat_h)\|_{\partial K}^2$.

\subsection{Matrix formulation and static condensation}
We apply matrix representation of the $H$(div)-conforming HDG scheme in \eqref{weakForm}  to demonstrate the static condensation process. We denote the coefficient vectors of $\bldu_h,\blduhat_h,p_h$ in their basis functions by $\matrixU,\matrixUhat,\matrixP$, the coefficient vector of $\lineu_h$ by $\matrixlineU:=[\matrixU,\matrixUhat]^T$, the vector of the linear form $\left(\bld{f},\bld{v}_h\right)_{\Th}$ by $\mathbf{F}$, the Euclidean inner product in $\mathbb{R}^n$ by $\langle\cdot,\cdot\rangle_2$. Then, we define the matrix {$\matrixA, \matrixB$ and $\matrixC$} corresponding to the bilinear forms in \eqref{weakForm} by
\begin{alignat*}{2}
	\langle\mathbf{A}\matrixlineU,\matrixlineV\rangle_2&:=
	a(\lineu_h,\linev_h),\;\;&&\forall\;\lineu_h,\linev_h\in\lineV_{h,0}^k,\\
	\langle\mathbf{B}\matrixP,\matrixV\rangle_2&:=
	b(p_h,\bldv_h),\;\;&&\forall\; p_h\in Q_{h,0}^{k-1},\bldv_h\in\bldV_{h,0}^k,\\
	\langle\mathbf{C}\matrixP,\matrixQ\rangle_2&:=
	c(p_h,q_h),\;\;&&\forall\; p_h,q_h\in Q_{h,0}^{k-1}.
\end{alignat*}
Then, we have the matrix formulation of the HDG scheme \eqref{weakForm}:
\begin{equation}
	\label{originM}
	\begin{bmatrix}[1.5]
		\mathbf{A} & \mathbf{B}\\
		\mathbf{B}^\mathrm{T}    & \mathbf{C}
	\end{bmatrix}
	\begin{bmatrix}[1.5]
		\matrixlineU \\
		\matrixP
	\end{bmatrix}
	=
	\begin{bmatrix}[1.5]
		\mathbf{F} \\
		\mathbf{0}
	\end{bmatrix}.
\end{equation}
\revise{By the definition of the corresponding bilinear forms, matrix $\matrixA$ is SPD, and $\matrixC$ is symmetric negative semi-definite.} When the polynomial degree $k\ge 2$, we solve the linear system \eqref{originM} by applying element-wise static condensation, eliminating the local interior degrees of freedom (DOFs) in $\bldV_{h,0}^{k,o}$ and higher order pressure DOFs in $Q_{h,0}^{k-1,o}$, as was done in \cite{LehrenfeldSchoberl16}. Then, the {global} unknowns in the linear system after static condensation {are the DOFs  associated with $\bldV_{h,0}^{k,\partial}$, $\bld{\Vhat}_{h,0}^{k-1}$ and $\overline{Q}_h$}. 

\blue{
{To illustrate the static condensation process}, we denote the coefficient vectors of $(\bldu_h^o,\bldu_h^\partial,\blduhat_h,\overline{p}_{h},$\newline$p_h^o)\in\bldV_{h,0}^{k,o}\times\bldV_{h,0}^{k,\partial}\times\bld{\Vhat}_{h,0}^{k-1}\times\overline{Q}_{h,0}\times Q_{h,0}^{k-1,o}$
in their basis functions as $\matrixU^o,\matrixU^{\partial},\matrixUhat,\overline{\matrixP},\matrixP^o$. {We obtain the following orthogonality relationships through the choice of the finite element spaces, see \eqref{spaceRelation}:}
\begin{alignat*}{3}
b(\overline{p}_{h},\bldu_h^o) &= -(\overline{p}_{h},\nabla\cdot\bldu_h^o)_{\Th}&&= 0,\;\;&&\forall\; \overline{p}_{h}\in\overline{Q}_h, \bldu_h^o\in\bldV_{h}^{k,o},
\\
b(p_h^o,\bldu_h^\partial) &= -(p_h^o,\nabla\cdot\bldu_h^\partial)_{\Th}&&= 0,\;\;&&\forall\;p^o_h\in Q_h^{k-1,o}, \bldu_h^\partial\in \bldV_{h}^{k,\partial},
\\
c(p_h^o, \; \overline{q}_{h}) &=  -\frac{1}{\lambda}(p_h^o,\;\overline{q}_{h})_{\Th} && = 0, \;\;&&\forall\;p^o_h\in Q_h^{k-1,o}, \overline{q}_{h}\in\overline{Q}_h.
\end{alignat*}
With the above facts and by rearranging the order of unknowns in \eqref{originM}, we get:
\begin{equation}
	\label{reorderM}
	\begin{bmatrix}[1.5]
		\matrixA_{u^o u^o}  & \matrixA_{u^o u^\partial} & \matrixA_{u^o\widehat{u}} & \mathbf{0} & \matrixB_{u^o p^o} 
		\\
		\matrixA_{u^\partial u^o}  & \matrixA_{u^\partial u^\partial} & \matrixA_{u^\partial\widehat{u}} & \matrixB_{u^\partial \overline{p}} & \mathbf{0}
		\\
		\matrixA_{\widehat{u} u^o}  & \matrixA_{\widehat{u} u^\partial} & \matrixA_{\widehat{u}\widehat{u}} & \mathbf{0} & \mathbf{0}
		\\
		\mathbf{0}  & \matrixB_{u^\partial \overline{p}}^\trans & \mathbf{0} & \matrixC_{\overline{p}\;\overline{p}} & \mathbf{0}
		\\
		\matrixB_{u^o p^o}^\trans  & \mathbf{0} & \mathbf{0} & \mathbf{0} & \matrixC_{p^o p^o}
	\end{bmatrix}
	\begin{bmatrix}[1.5]
		\matrixU^o \\ 
		\matrixU^{\partial} \\
		\matrixUhat \\
		\overline{\matrixP} \\
		\matrixP^o
	\end{bmatrix}
	=
	\begin{bmatrix}[1.5]
		\mathbf{F}_{u^o} \\
		\mathbf{F}_{u^\partial} \\
		\mathbf{0} \\
		\mathbf{0} \\
		\mathbf{0}
	\end{bmatrix},
\end{equation}

where the subscripts of the matrices represent the subspaces of test and trial functions of the corresponding bilinear forms, the subscripts of $\mathbf{F}$ represent the subspaces of test functions of the linear form. With the following result, we first condense out the local pressure DOFs.}
\begin{lemma}
	\label{lemma:Doo}
	Assume $\matrixU^o,\matrixP^o$ are the solution to the system \eqref{reorderM}, we have:
	\begin{equation*}
		\langle\matrixB_{u^o p^o}\matrixP^o,\matrixV^o\rangle_2=
		\langle\matrixD_{u^o u^o}\matrixU^o,\matrixV^o\rangle_2,\;\;\forall\;\bldv_h^o\in\bldV_{h}^{k,o}.
	\end{equation*}
	where $\langle\matrixD_{u^o u^o}\matrixU^o,\matrixV^o\rangle_2=\lambda(\nabla\cdot\bldu_h^o,\nabla\cdot\bldv_h^o)_{\Th}, \forall\;\bldu_h^o,\bldv_h^o\in\bldV_{h}^{k,o}$.
\end{lemma}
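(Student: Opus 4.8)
The plan is to unpack the fifth (last) block row of \eqref{reorderM} at the variational level and then use the relation $\nabla\cdot\bldV_h^{k,o}=Q_h^{k-1,o}$ from \eqref{spaceRelation} to promote an $L^2$-orthogonality statement into a pointwise identity relating $p_h^o$ to $\nabla\cdot\bldu_h^o$; the claimed equality then follows by a one-line substitution into the definition of $b(\cdot,\cdot)$.

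Concretely, I would first read off the fifth equation of \eqref{reorderM}, namely $\matrixB_{u^o p^o}^\trans\matrixU^o+\matrixC_{p^o p^o}\matrixP^o=\mathbf{0}$, and rewrite it variationally using the definitions of $b$ and $c$: for every $q_h^o\in Q_h^{k-1,o}$,
\[
b(q_h^o,\bldu_h^o)+c(p_h^o,q_h^o)=0
\qquad\Longleftrightarrow\qquad
\Big(q_h^o,\ \nabla\cdot\bldu_h^o+\tfrac{1}{\lambda}\,p_h^o\Big)_{\Th}=0 .
\]
Next, since $\nabla\cdot\bldu_h^o\in Q_h^{k-1,o}$ by \eqref{spaceRelation} and $p_h^o\in Q_h^{k-1,o}$ by construction, the function $\nabla\cdot\bldu_h^o+\tfrac{1}{\lambda}p_h^o$ belongs to $Q_h^{k-1,o}$ and is $L^2(\Omega)$-orthogonal to every element of $Q_h^{k-1,o}$, hence vanishes identically; that is, $p_h^o=-\lambda\,\nabla\cdot\bldu_h^o$. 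Finally I would substitute this into
\[
\langle\matrixB_{u^o p^o}\matrixP^o,\matrixV^o\rangle_2
= b(p_h^o,\bldv_h^o)
= -(p_h^o,\nabla\cdot\bldv_h^o)_{\Th}
= \lambda\,(\nabla\cdot\bldu_h^o,\nabla\cdot\bldv_h^o)_{\Th},
\qquad \forall\,\bldv_h^o\in\bldV_h^{k,o},
\]
which is exactly $\langle\matrixD_{u^o u^o}\matrixU^o,\matrixV^o\rangle_2$ by the stated definition of $\matrixD_{u^o u^o}$, i.e. the asserted identity.

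The argument is short, and the only step that needs genuine care is the passage from \emph{orthogonal to all test functions} to \emph{identically zero}: this relies on the fact that both $p_h^o$ and $\nabla\cdot\bldu_h^o$ lie in the test space $Q_h^{k-1,o}$, which is precisely what the hierarchical splittings of $\bldV_h^k$, $Q_h^k$ and the inclusion \eqref{spaceRelation} are designed to ensure — without them one would only obtain the weaker identity $p_h^o=-\lambda\,\Pi(\nabla\cdot\bldu_h^o)$ with $\Pi$ the $L^2$-projection onto $Q_h^{k-1,o}$. (The identity is phrased for finite $\lambda$; in the generalized-Stokes limit $\lambda=+\infty$ one has $c\equiv0$, the fifth row instead forces $\nabla\cdot\bldu_h^o=0$, and the role of $\matrixD_{u^o u^o}$ degenerates into the Lagrange-multiplier enforcement of this pointwise divergence-free constraint.)
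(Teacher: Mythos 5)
Your proposal is correct and follows essentially the same route as the paper: extract the last block row of \eqref{reorderM}, use $\nabla\cdot\bldV_h^{k,o}=Q_h^{k-1,o}$ to turn the orthogonality into the pointwise identity $p_h^o=-\lambda\,\nabla\cdot\bldu_h^o$, and substitute into $b(\cdot,\cdot)$. Your explicit justification that both $p_h^o$ and $\nabla\cdot\bldu_h^o$ lie in the test space $Q_h^{k-1,o}$ (so orthogonality to all test functions forces the sum to vanish) is a point the paper's terser proof leaves implicit, and is correctly handled.
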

\begin{proof}
	Since we have $\nabla\cdot\bldV_{h}^{k,o}=Q_{h}^{k-1,o}$ in \eqref{spaceRelation}, from the second equation in \eqref{reorderM} we get $\langle\matrixB^\trans_{u^o p^o}\matrixU^o+\matrixC_{p^o p^o}\matrixP^o,\matrixQ^o\rangle_2=(-\nabla\cdot\bldu_h^o-\frac{1}{\lambda}p_h^o,q_h^o)_{\Th} = 0,\;\forall\;q_h^o\in Q_{h}^o$ and we have $p_h^o = -\lambda\nabla\cdot\bldu_h^o$. Then the result follows.
\end{proof}
By Lemma \ref{lemma:Doo}, we eliminate $\mathbf{P}^o$ from \eqref{reorderM} and get:
\begin{equation}
	\label{nopoM}
	\begin{bmatrix}[1.5]
	\matrixA' & \matrixB' \\
	\matrixB'^\trans & \matrixC'	
	\end{bmatrix}
	\begin{bmatrix}[1.5]
		\matrixlineU \\ 
		\overline{\matrixP}
	\end{bmatrix}
	=
	\begin{bmatrix}[1.5]
		\mathbf{F}  \\
		\mathbf{0}
	\end{bmatrix}.
\end{equation}
where
\begin{equation*}
	\matrixA' = 
	\begin{bmatrix}[1.5]
		\matrixA_{u^o u^o}+\mathbf{D}_{u^o u^o} &  \matrixA_{u^o u^\partial} & \matrixA_{u^o \widehat{u}} \\
		\matrixA_{u^\partial u^o} & \matrixA_{u^\partial u^\partial} & \matrixA_{u^\partial \widehat{u}}  \\
		\matrixA_{\widehat{u} u^o}  & \matrixA_{\widehat{u} u^\partial} & \matrixA_{\widehat{u} \widehat{u}} 
	\end{bmatrix}, \quad
	\matrixB' = 
	\begin{bmatrix}[1.5]
		\mathbf{0} \\
		\matrixB_{u^\partial \overline{p}} \\
		 \mathbf{0}
	\end{bmatrix},\quad
	\matrixC'=
	\matrixC_{\overline{p}\;\overline{p}}.
\end{equation*}
The stiffness matrix $\matrixA'$ is still SPD. The corresponding operator formulation of \eqref{nopoM} is to find $(\bldu_h,\overline{p}_{h})$ such that
\begin{align}
	\label{nopoweakForm}
	\left.
	\renewcommand{\arraystretch}{1.5} 
	\begin{tabular}{r l}
		$\lambda(\nabla\cdot\bldu_h^o,\nabla\cdot\bldv_h^o)_{\Th}+a(\lineu_h,\linev_h)+ b(\overline{p}_{h},\bld{v}_h)=$ &
		$\left(\bld{f},\bld{v}_h\right)_{\Th}$ \\
		$b(\bld{u}_h,\overline{q}_h)+c(\overline{p}_{h},\overline{q}_h)=$ & $0$
	\end{tabular}
	\right\},\quad\forall\;\;(\linev_h,\overline{q}_h)\in\lineV_{h,0}^{k}\times \overline{Q}_{h,0},
\end{align}
Similarly, by condensing out $\matrixU^o$ from \eqref{nopoM}{ and denoting the block-diagonal matrix $\mathbf{E} := \matrixA_{u^o u^o}+\mathbf{D}_{u^o u^o}$,} we get the final condensed linear system to solve:
\begin{equation}
	\label{condensedM}
		\begin{bmatrix}[1.5]
		\mathbf{A}_{g} & \mathbf{B}_g \\
		\mathbf{B}^\trans_g & \mathbf{C}_g
	\end{bmatrix}
	\begin{bmatrix}[1.5]
		\matrixlineU_g \\
		\overline{\matrixP}
	\end{bmatrix}
	=
	\begin{bmatrix}[1.5]
		\mathbf{F}_g \\
		\mathbf{0}
	\end{bmatrix},
\end{equation}
where
\begin{align*}
	\matrixA_g &:=
	\begin{bmatrix}[1.5]
		\matrixA_{u^\partial u^\partial}-\matrixA_{u^\partial u^o}\mathbf{E}^{-1}\matrixA_{u^o u^\partial} & \matrixA_{u^\partial \widehat{u}}-\matrixA_{u^\partial u^o}\mathbf{E}^{-1}\matrixA_{u^o \widehat{u}} \\
		\matrixA_{\widehat{u} u^\partial}-\matrixA_{\widehat{u} u^o}\mathbf{E}^{-1}\matrixA_{u^o u^\partial} & \matrixA_{\widehat{u}\widehat{u}}-\matrixA_{\widehat{u} u^o}\mathbf{E}^{-1}\matrixA_{u^o \widehat{u}}
	\end{bmatrix},\quad
	\matrixB_g :=
	\begin{bmatrix}[1.5]
		\matrixB_{u^\partial \overline{p}} \\
		 \mathbf{0}
	\end{bmatrix}, \quad
	\matrixC_g := \matrixC_{\overline{p}\;\overline{p}},
	\\
	\matrixlineU_g &:= 
	\begin{bmatrix}
		\matrixU^\partial  & \matrixUhat
	\end{bmatrix}^\trans,\quad
	\mathbf{F}_g :=
	\begin{bmatrix}
		\mathbf{F}_{u^\partial}-\matrixA_{u^\partial u^o}\mathbf{E}^{-1}\mathbf{F}_{u^o}
		&
		-\matrixA_{\widehat{u} u^o}\mathbf{E}^{-1}\mathbf{F}_{u^o}
	\end{bmatrix}^\trans.
\end{align*}
\revise{Since basis functions of $\bld{V}_h^{k,o}$ are locally supported, the matrix $\mathbf{E}$ is block-diagonal, and the inverse can be obtained element-wise.}

\section{block-diagonal Preconditioners for the Condensed System}
\label{sec:precondition}
In this section, we construct robust and optimal preconditioners for the condensed $H$(div)-conforming HDG scheme in \eqref{condensedM}. From \cite{pestana2015natural, benzi2008some} an ideal block-diagonal preconditioner for the saddle point system \eqref{condensedM} is
\begin{equation*}
	\begin{bmatrix}[1.5]
		\mathbf{A}_g^{-1} &  \\
		 & \mathbf{S}_g^{-1}
	\end{bmatrix},
\end{equation*}
where $\mathbf{S}_g$ is the following (negative) Schur complement
\begin{equation*}
	\mathbf{S}_g  = -\mathbf{C}_g+\mathbf{B}_g^\trans\mathbf{A}_g^{-1}\mathbf{B}_g.
\end{equation*}
Obviously, it is not practical to compute the dense matrix {$\mathbf{S}_g$} directly. {In practice, we seek computable approximations to the two matrix inverses $\matrixA_g^{-1}$ and $\mathbf{S}_g^{-1}$.} For the stiffness matrix $\mathbf{A}_g$, we use an ASP with continuous element-wise linear function space as the auxiliary space. For the Schur complement $\mathbf{S}_g$, we get the explicit expression of a spectrally equivalent matrix $\widetilde{\mathbf{S}}_g$ inspired by \cite{Mardal04, Olshanskii06}.

\subsection{Preconditioner for the stiffness matrix $\matrixA_g$}
We extend from the work in our previous study \cite{fu2021uniform} on the ASP for the divergence-conforming HDG scheme for the reaction-diffusion equations and apply it to the stiffness matrix $\matrixA_g$ here.
Since the analysis procedure is almost the same as \cite[Section 3.3]{fu2021uniform}, we quote from it and sketch our main steps here.
\begin{enumerate}[label=(\roman*),leftmargin= 15 pt]
	\item
	We starts from \eqref{nopoweakForm} to get the operator formulation of $\matrixA_g$. We define the following mapping $\pazocal{L}^o_h:\bldV_{h,0}^{k,\partial}\times\bldVhat_{h,0}^{k-1}\rightarrow\bldV_{h,0}^{k,o}$: Given $(\bldu_h^\partial,\blduhat_h)\in\bldV_{h,0}^{k,\partial}\times\bldVhat_{h,0}^{k-1}$, $\pazocal{L}^o_h(\bldu_h^\partial,\blduhat_h)\in\bldV_{h,0}^{k,o}$ is the unique solution that satisfies
	\begin{align}
	\label{eqn:liftOpt}
		\lambda\left(\nabla\cdot\pazocal{L}^o_h(\bldu_h^\partial,\blduhat_h),\nabla\cdot\bldv_h^o\right)_{\Th}&+a\left((\pazocal{L}^o_h(\bldu_h^\partial,\blduhat_h),0),(\bldv_h^o,0)\right)
		\\
		\nonumber
		&=-a\left((\bldu_h^\partial,\blduhat_h),(\bldv_h^o,0)\right),
	\end{align}
	for all $\bldv_h^o\in\bldV_{h,0}^{k,o}$. Then, we express $\matrixA_g$ in the final condensed HDG scheme as
	\begin{align*}
		\langle\matrixA_g\underline{\matrixU}_g,\underline{\matrixV}_g\rangle_2
		=&
		\lambda\left(\nabla\cdot\pazocal{L}^o_h(\bldu_h^\partial,\blduhat_h),\nabla\cdot\pazocal{L}^o_h(\bldv_h^\partial,\bldvhat_h)\right)_{\Th} \\
		&+a\left((\bldu_h^\partial+\pazocal{L}^o_h(\bldu_h^\partial,\blduhat_h),\blduhat_h),(\bldv_h^\partial+\pazocal{L}^o_h(\bldv_h^\partial,\bldvhat_h),\bldvhat_h)\right),
	\end{align*}
	for all $\bldu_h^\partial,\bldv_h^\partial\in\bldV_{h,0}^{k,\partial}$, $\blduhat_h, \bldvhat_h\in\bldVhat_{h,0}^{k-1}$. \revise{Matrix $\matrixA_g$ is SPD by the definition of the corresponding bilinear form, and we denote the norm defined by it as}
	\begin{align*}
		\|(\bldu_h^\partial,\blduhat_h)\|_{\matrixA_g}^2
		:=&
		\langle\matrixA_g\underline{\matrixU}_g,\underline{\matrixU}_g\rangle_2.
	\end{align*}
	{
	Taking $\bld{v}_h^o = \pazocal{L}^o_h(\bldu_h^\partial, \blduhat_h)$ and applying the Cauchy Schwarz inequality in \eqref{eqn:liftOpt}, we have:
	\begin{align*}
	    \lambda\left(\nabla\cdot\pazocal{L}^o_h(\bldu_h^\partial,\blduhat_h),\nabla\cdot\pazocal{L}^o_h(\bldu_h^\partial,\blduhat_h)\right)_{\Th}
	    &+
	    a\left((\pazocal{L}^o_h(\bldu_h^\partial,\blduhat_h),0),(\pazocal{L}^o_h(\bldu_h^\partial,\blduhat_h),0)\right)
	    \\
	    &\leq
	    a\left((\bldu_h^\partial,\blduhat_h),(\bldu_h^\partial,\blduhat_h)\right).
	\end{align*}
	Hence, by the triangle inequality we get:
	\begin{align*}
	    \|(\bldu_h^\partial,\blduhat_h)\|_{\matrixA_g}^2
	    \leq&
	    \lambda\left(\nabla\cdot\pazocal{L}^o_h(\bldu_h^\partial,\blduhat_h),\nabla\cdot\pazocal{L}^o_h(\bldu_h^\partial,\blduhat_h)\right)_{\Th}
	    +
	    a\left((\pazocal{L}^o_h(\bldu_h^\partial,\blduhat_h),0),(\pazocal{L}^o_h(\bldu_h^\partial,\blduhat_h),0)\right)
	    \\
	    &+
	    a\left((\bldu_h^\partial,\blduhat_h),(\bldu_h^\partial,\blduhat_h)\right)
	    \\
	    \leq&
	    a\left((\bldu_h^\partial,\blduhat_h),(\bldu_h^\partial,\blduhat_h)\right)
	    \\
	    \simeq&
	    |\!|\!|(\bldu_h^\partial, \blduhat_h)|\!|\!|_{\ast,h}^2.
	\end{align*}
	}
	
	\item
	Next, we define an $L^2$-like inner product on the compound space $\bldV_{h,0}^{k,\partial}\times\bldVhat_{h,0}^{k-1}$:
	\begin{align*}
		\left((\bldu_h^\partial,\blduhat_h),(\bldv_h^\partial,\bldvhat_h)\right)_{0,h}
		:=
		(2\mu+\tau h^2)\left(\bldu_h^\partial,\bldv_h^\partial\right)_{\Th}+2\mu h\left\langle\mathsf{tang}(\blduhat_h),\mathsf{tang}(\bldvhat_h)\right\rangle_{\Eh},
	\end{align*}
	and denote its corresponding norm as $\|(\bldu_h^\partial,\blduhat_h)\|_{0,h}^2:=\left((\bldu_h^\partial,\blduhat_h),(\bldu_h^\partial,\blduhat_h)\right)_{0,h}$. Then, we have the following result:
	\begin{align*}
		\rho_{\matrixA_g}&\simeq h^{-2},
	\end{align*}
	where $\rho_{\matrixA_g}=\rho(\matrixA_g)$ denotes the spectral radius of $\matrixA_g$.
	
	\item
	Denote the diagonal matrix $\mathbf{D}_g$ with the same diagonal components of $\matrixA_g$. For the linear operator $j_g:\bldV_{h,0}^{k,\partial}\times\bldVhat_{h,0}^{k-1}\rightarrow\bldV_{h,0}^{k,\partial}\times\bldVhat_{h,0}^{k-1}$ corresponding to the Jacobi smoother $\mathbf{R}_g=\mathbf{D}_g^{-1}$, we have:
	\begin{equation*}
		\left(j_g(\bldu_h^\partial,\blduhat_h),(\bldu_h^\partial,\blduhat_h)\right)_{0,h}
		\simeq
		\rho_{\matrixA_g}^{-1}\|(\bldu_h^\partial,\blduhat_h)\|_{0,h}^2.
	\end{equation*}
	
	\item
	We define a continuous element-wise linear finite element space: 
	\[
		\pazocal{V}_{h,0}^1:=\{\bldv_0\in[H^1(\Omega)]^d:\bldv_0|_K\in\pazocal{P}^1(K),\forall K\in\Th,\bldv_0|_F=0,\forall F\in\Eh^\partial\},
	\]
	and use it as the auxiliary space. The matrix $\matrixA_0$ and bilinear operator $a_0$ on $\pazocal{V}_{h,0}^1$ corresponding to $\matrixA_g$ is defined as
	\begin{align*}
		\langle\matrixA_0\matrixU_0,\matrixV_0\rangle_2 = a_0(\bldu_0,\bldv_0):= \int_\Omega \left(2\mu\nabla(\bldu_0):\nabla(\bldv_0)+\tau\bldu_0\cdot\bldv_0\right)\mathsf{dx}.
	\end{align*}
	\revise{$\matrixA_0$ is SPD by the definition of the corresponding bilinear form, and we define the induced norm on $\pazocal{V}_{h,0}^1$ by $\|\cdot\|_{\matrixA_0}^2:=a_0(\cdot,\cdot)$.} 
	$\matrixA_0$ can be easily preconditioned by an algebraic or geometric multigrid procedure.
	
	\item
	We define the operator $\underline{{\bld{\Pi}}}_h=(\bld{\Pi}_h^\partial,\widehat{\bld{\Pi}}_h):\pazocal{V}_{h,0}^1\rightarrow\bldV_{h,0}^{k,\partial}\times\bldVhat_{h,0}^{k-1}$ by
	\begin{align*}
		\langle\bld{\Pi}_h^\partial\bldu_0\cdot\bld{n},\bldv_h^\partial\cdot\bld{n}\rangle_{\Eh}
		&=
		\langle\bldu_0\cdot\bld{n},\bldv_h^\partial\cdot\bld{n}\rangle_{\Eh},\quad\forall\bldv_h^\partial\in\bldV_{h,0}^{k,\partial}, \\
		\langle\mathsf{tang}(\widehat{\bld{\Pi}}_h\bldu_0),\mathsf{tang}(\bldvhat_h)\rangle_{\Eh}
		&=
		\langle\mathsf{tang}(\bldu_0),\mathsf{tang}(\bldvhat_h)\rangle_{\Eh},\quad\forall\bldvhat_h\in\bldVhat_{h,0}^{k-1}.
	\end{align*}
	The operator $\bld{P}_h:\bldV_{h,0}^{k,\partial}\times\bldVhat_{h,0}^{k-1}\rightarrow\pazocal{V}_{h,0}^1$ is defined on mesh vertices by
	\begin{align*}
		\bld{P}_h(\bldu_h^\partial,\blduhat_h)(\bld{x}_n)=\left\{
		\begin{tabular}{l l}
			$0$, & if $\bld{x}_n\in\partial\Omega$,  \\
			$\frac{1}{\# K_n}\sum_{K\in K_n}\left(\bldu_h^\partial+\pazocal{L}^o_h(\bldu_h^\partial,\blduhat_h)\right)|_K(\bld{x}_n)$, &
			if $\bld{x}_n\notin\partial\Omega$,
		\end{tabular}
		\right.
	\end{align*}
	where $\bld{x_n}$ is a vertex of $\Th$, $K_n$ is the set of elements of $\Th$ that share the vertex $\bld{x_n}$ and $\# K_n$ is the cardinality of it. {We note that the operator $\bld{P}_h$ is only used for analysis and does not appear in the computing process}. Then, we have the following boundedness properties:
	\begin{subequations}
	    \label{eqn:ASP_condition}
    	\begin{align}
    		\|\underline{\bld{\Pi}}_h\bldu_0\|_{\matrixA_g}&\lesssim
    		\|\bldu_0\|_{\matrixA_0},
    		\\
    		\|\bld{P}_h(\bldu_h^\partial,\blduhat_h)\|_{\matrixA_0}&\lesssim
    		\|(\bldu_h^\partial,\blduhat_h)\|_{\matrixA_g},
    		\\
    		\|(\bldu_h^\partial,\blduhat_h)-\underline{\bld{\Pi}}_h\bld{P}_h(\bldu_h^\partial,\blduhat_h)\|_{0,h}&\lesssim
    		\rho_{\matrixA_g}^{-1/2}\|(\bldu_h^\partial,\blduhat_h)\|_{\matrixA_g}.
    	\end{align}
	\end{subequations}
\end{enumerate}
Finally, we denote $\kappa$ as the condition number and obtain the optimality of our auxiliary space preconditioner in the following theorem by invoking \cite[Theorem 2.1]{Xu96} combined with the above results.

\begin{theorem}[ASP for the stiffness matrix]
	\label{theo:ASPpre}
	Let 
	\begin{align}
	\label{asp}
	\widetilde{\matrixA}_g^{-1}=\mathbf{R}_g+\underline{\bld{\Pi}}_h\bld{B_0}\underline{\bld{\Pi}}_h^\trans	    
	\end{align}
	be the auxiliary space preconditioner for the operator $\matrixA_g$ in the final condensed HDG scheme \eqref{condensedM} with $\kappa(\matrixB_0\matrixA_0)\simeq 1$, then we have $\kappa(\widetilde{\matrixA}_g^{-1}\matrixA_g)\simeq 1$.
\end{theorem}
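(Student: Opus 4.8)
The plan is to invoke the abstract auxiliary space preconditioning framework of Xu, specifically \cite[Theorem 2.1]{Xu96}, which states that if one has a smoother $\mathbf{R}_g$ on the original space, an auxiliary space with operator $\matrixA_0$ and an SPD preconditioner $\matrixB_0$ for it with $\kappa(\matrixB_0\matrixA_0)\simeq 1$, a prolongation $\underline{\bld{\Pi}}_h$, and the three standard assumptions are satisfied, then the additive preconditioner $\widetilde{\matrixA}_g^{-1}=\mathbf{R}_g+\underline{\bld{\Pi}}_h\bld{B}_0\underline{\bld{\Pi}}_h^\trans$ yields $\kappa(\widetilde{\matrixA}_g^{-1}\matrixA_g)\simeq 1$. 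So the body of the proof is essentially a verification that all hypotheses of that theorem hold with the objects constructed in items (i)--(v) above, with all hidden constants independent of $\mu,\lambda,\tau$ and $h$.

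Concretely, I would proceed as follows. First I would recall that $\matrixA_g$ is SPD (established above via the lifting representation and the coercivity/boundedness of $a(\cdot,\cdot)$), so the spectral equivalence of preconditioners is meaningful. Second, I would check the smoother hypothesis: the Jacobi smoother $\mathbf{R}_g=\mathbf{D}_g^{-1}$ must satisfy $\mathbf{R}_g$ SPD together with the spectral bound that item (iii) provides, namely $(j_g(\bldu_h^\partial,\blduhat_h),(\bldu_h^\partial,\blduhat_h))_{0,h}\simeq\rho_{\matrixA_g}^{-1}\|(\bldu_h^\partial,\blduhat_h)\|_{0,h}^2$, combined with $\rho_{\matrixA_g}\simeq h^{-2}$ from item (ii); this is exactly the ``$\mathbf{R}_g$ is a smoother'' condition in Xu's framework, measured in the $\|\cdot\|_{0,h}$ norm. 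Third, I would quote the three boundedness estimates \eqref{eqn:ASP_condition}: the prolongation $\underline{\bld{\Pi}}_h$ is bounded $\pazocal{V}_{h,0}^1\to\bldV_{h,0}^{k,\partial}\times\bldVhat_{h,0}^{k-1}$ in energy, the (analysis-only) operator $\bld{P}_h$ is bounded in energy in the reverse direction, and the ``approximation/stability'' estimate $\|(\bldu_h^\partial,\blduhat_h)-\underline{\bld{\Pi}}_h\bld{P}_h(\bldu_h^\partial,\blduhat_h)\|_{0,h}\lesssim\rho_{\matrixA_g}^{-1/2}\|(\bldu_h^\partial,\blduhat_h)\|_{\matrixA_g}$ holds. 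Finally, I would observe that $\matrixA_0$ is SPD and the hypothesis $\kappa(\matrixB_0\matrixA_0)\simeq 1$ is assumed in the statement, so all inputs to \cite[Theorem 2.1]{Xu96} are in place; applying it gives $\kappa(\widetilde{\matrixA}_g^{-1}\matrixA_g)\simeq 1$, with constants depending only on the mesh shape-regularity and the polynomial degree, as claimed.

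The only real content beyond bookkeeping is making sure the hypotheses are assembled in exactly the form Xu's theorem requires — in particular that the fictitious-space inner product used to measure the smoother ($\|\cdot\|_{0,h}$) is the same one used in the third estimate of \eqref{eqn:ASP_condition}, and that the factor $\rho_{\matrixA_g}^{-1/2}$ there matches the smoother scaling from items (ii)--(iii). Since all of these were arranged in items (i)--(v) precisely to match \cite[Section 3.3]{fu2021uniform}, the verification is mechanical. The main obstacle, such as it is, is purely expository: stating clearly that every $\lesssim$ appearing in items (i)--(v) is uniform in the model parameters $\mu,\lambda,\tau$ and in $h$ — this uniformity is what makes the conclusion $\kappa(\widetilde{\matrixA}_g^{-1}\matrixA_g)\simeq 1$ a \emph{robust} statement rather than merely an $h$-independent one — and then citing \cite[Theorem 2.1]{Xu96} verbatim. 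No new estimate needs to be proved here; the theorem is a corollary of the already-listed ingredients.
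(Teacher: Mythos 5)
Your proposal matches the paper's argument exactly: the paper also proves this theorem by simply invoking \cite[Theorem 2.1]{Xu96} together with the ingredients assembled in items (i)--(v) (the norm bounds, the spectral radius estimate $\rho_{\matrixA_g}\simeq h^{-2}$, the Jacobi smoother equivalence, and the three boundedness properties \eqref{eqn:ASP_condition}). No further estimates are given there either, so your verification-of-hypotheses outline is both correct and faithful to the paper's route.
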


{
\begin{remark}
    We note that there exist two differences between the ASP applied here and the one in our previous work \cite{fu2021uniform}. 
    Firstly, there is an extra term $\lambda(\nabla\cdot\bldu_h^o,\nabla\cdot\bldv_h^o)_{\Th}$ in the bilinear operator corresponding to $\matrixA_g$ in this study, which makes the norm $\|\cdot\|_{\mathbf{A}_g}$ stronger but still bounded by $|\!|\!|(\bldu_h^\partial, \blduhat_h)|\!|\!|_{\ast,h}$ as mentioned in step (i).
    Due to the auxiliary space $\pazocal{V}_{h,0}^1\subset \bldV_{h,0}^{k,\partial}$, this term does not appear in the bilinear operator $a_0(\cdot,\cdot)$.
    Secondly, symmetric gradient is used in the bilinear operator $a(\cdot,\cdot)$ instead of gradient. The discrete Korn's inequality holds as in \cite{brenner2004korn} and we have the norm equivalence $\sum_{K\in\Th}\|\bld{D}(\bldu_h)\|_K^2+\frac{1}{h}\|\mathsf{tang}(\bldu_h-\blduhat_h)\|_{\partial K}^2
    \simeq
    \sum_{K\in\Th}\|\nabla(\bldu_h)\|_K^2+\frac{1}{h}\|\mathsf{tang}(\bldu_h-\blduhat_h)\|_{\partial K}^2$. Therefore, this difference does not affect the results of our analysis as well.
\end{remark}
}

\subsection{Preconditioner for the Schur complement $\mathbf{S}_g$}
\blue{
Since we use the concept of sum and intersection of Hilbert spaces in this subsection, we briefly introduce the definition and basic properties here, {see \cite[Section 2.2]{Olshanskii06} and \cite[Chapter 2]{Bergh:1617905}}. Assume $X$ and $Y$ are compatible Hilbert spaces both continuously contained in some larger Hilbert space, then their intersection $X\cap Y$ and their sum $X+Y$ are both complete Hilbert spaces with corresponding norms:
\begin{alignat}{3}
	\label{interNorm}
	&\|z\|_{X\cap Y}^2 &&=\;
	\|z\|_X^2 + \|z\|_Y^2,\;\;&&\forall\; z\in X\cap Y,
	\\
	\label{sumNorm}
	&\|z\|_{X+Y}^2 &&=\;
	\inf(\|x\|_X^2+\|y\|_Y^2),\;\;&&\forall\; z=x+y,x\in X, y\in Y.
\end{alignat}
If $X_1$ and $Y_1$ as normed vector spaces, we denote $X_1'$ as the dual space of $X_1$, $\pazocal{L}(X_1,Y_1)$ as the space of bounded linear mapping from $X_1$ to $Y_1$. Then, we have the following properties, the proof of which we refer to \cite[Chapter 2]{Bergh:1617905}.
\begin{lemma}
	\label{lemma:mapbound}
	Assume $X_1,X_2$ and $Y_1,Y_2$ are pairs of compatible normed vector spaces. If the linear mapping $T\in \pazocal{L}(X_1,Y_1)\cap \pazocal{L}(X_2,Y_2)$, then we have:
\begin{align}
	\label{mapBound}
	\|T\|_{X_1+X_2\rightarrow Y_1+Y_2}\le
	(\|T\|_{X_1\rightarrow Y_1}^2+\|T\|_{X_2\rightarrow Y_2}^2)^{1/2}.
\end{align}
If both $X$ and $Y$ are Hilbert spaces, given any $g\in(X+Y)'$, we have:
\begin{align}
	\label{dualTrans}
	\|g\|_{(X+Y)'}=
	\|g\|_{X'+Y'}.
\end{align}
\end{lemma}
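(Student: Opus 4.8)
The statement collects two essentially independent facts, and the plan is to handle them separately. The inequality \eqref{mapBound} is elementary from the definitions \eqref{interNorm}--\eqref{sumNorm} of the sum and intersection norms, whereas \eqref{dualTrans} is the classical duality theorem for sums and intersections of Hilbert couples; for the latter I would either cite \cite[Theorem 2.7.1]{Bergh:1617905} verbatim or reprove it by the short self-contained argument sketched below.

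For \eqref{mapBound}: I would fix $z\in X_1+X_2$ together with an arbitrary admissible decomposition $z=x_1+x_2$, $x_1\in X_1$, $x_2\in X_2$. Since $T$ is bounded on each couple, $Tz=Tx_1+Tx_2$ is then a legitimate decomposition of $Tz$ inside $Y_1+Y_2$, so by \eqref{sumNorm} its norm is at most $(\|Tx_1\|_{Y_1}^2+\|Tx_2\|_{Y_2}^2)^{1/2}\le(\|T\|_{X_1\to Y_1}^2\|x_1\|_{X_1}^2+\|T\|_{X_2\to Y_2}^2\|x_2\|_{X_2}^2)^{1/2}$. Bounding both operator norms by their maximum and then taking the infimum over all decompositions of $z$ yields $\|Tz\|_{Y_1+Y_2}\le\max(\|T\|_{X_1\to Y_1},\|T\|_{X_2\to Y_2})\,\|z\|_{X_1+X_2}$, which is in fact slightly stronger than \eqref{mapBound} because $\max(a,b)\le(a^2+b^2)^{1/2}$. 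There is no genuine obstacle here; the only point worth stating explicitly is that every $X$-decomposition of $z$ maps under $T$ to a legitimate $Y$-decomposition of $Tz$, so that \eqref{sumNorm} is applicable to the image.

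For \eqref{dualTrans}: the structural input is that $X$ and $Y$ are contractively contained in $X+Y$, i.e.\ $\|x\|_{X+Y}\le\|x\|_X$ and $\|y\|_{X+Y}\le\|y\|_Y$, obtained from \eqref{sumNorm} by using the trivial decompositions $x=x+0$ and $y=0+y$. Consequently, for $g\in(X+Y)'$ the restrictions $g_X:=g|_X$ and $g_Y:=g|_Y$ belong to $X'$ and $Y'$, and $g(x+y)=g_X(x)+g_Y(y)$. I would then prove two matching bounds. For the lower bound I would test $g$ against vectors $sx+ty$ with $s,t\ge0$ and with $x\in X$, $y\in Y$ of unit norm chosen so that $g_X(x)$ and $g_Y(y)$ are real and nearly equal to $\|g_X\|_{X'}$ and $\|g_Y\|_{Y'}$; since $\|sx+ty\|_{X+Y}\le(s^2+t^2)^{1/2}$, optimizing the resulting ratio over $(s,t)$ is a one-line Cauchy--Schwarz computation in $\mathbb{R}^2$ and gives $\|g\|_{(X+Y)'}\ge(\|g_X\|_{X'}^2+\|g_Y\|_{Y'}^2)^{1/2}$. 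For the upper bound, for any decomposition $z=x+y$ I would estimate $|g(z)|\le\|g_X\|_{X'}\|x\|_X+\|g_Y\|_{Y'}\|y\|_Y\le(\|g_X\|_{X'}^2+\|g_Y\|_{Y'}^2)^{1/2}(\|x\|_X^2+\|y\|_Y^2)^{1/2}$ by Cauchy--Schwarz in $\mathbb{R}^2$, then take the infimum over decompositions of $z$ and invoke \eqref{sumNorm} to conclude $\|g\|_{(X+Y)'}\le(\|g_X\|_{X'}^2+\|g_Y\|_{Y'}^2)^{1/2}$. The two bounds meet at the common value $(\|g_X\|_{X'}^2+\|g_Y\|_{Y'}^2)^{1/2}$, which is precisely the norm of $g$ in the dual couple appearing on the right-hand side of \eqref{dualTrans}.

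The main (and essentially the only) subtlety I expect is purely in the bookkeeping of dual norms: one must make sure that the optimization in the lower-bound step and the Cauchy--Schwarz estimate in the upper-bound step are carried out against exactly the norm that the right-hand side of \eqref{dualTrans} assigns to the dual couple, so that the two one-sided bounds coincide rather than differing by a constant. Apart from that, the argument uses only \eqref{interNorm}, \eqref{sumNorm}, and the definition of the dual norm; no reflexivity beyond the Hilbert-space structure, no density argument, and no completeness beyond what has already been recorded before the lemma is needed.
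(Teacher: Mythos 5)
The paper itself does not prove this lemma; it is quoted from \cite[Chapter 2]{Bergh:1617905}, so there is no internal proof to compare against. Your argument for \eqref{mapBound} is correct and self-contained: mapping an $X$-decomposition $z=x_1+x_2$ to the $Y$-decomposition $Tz=Tx_1+Tx_2$ and taking the infimum gives the (slightly sharper) bound with $\max(\|T\|_{X_1\to Y_1},\|T\|_{X_2\to Y_2})$, from which \eqref{mapBound} follows since $\max(a,b)\le(a^2+b^2)^{1/2}$.

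For \eqref{dualTrans}, however, there is a genuine gap in the final identification. Your two-sided estimate (restricting $g$ to $X$ and $Y$, testing against $sx+ty$, and using Cauchy--Schwarz in $\mathbb{R}^2$) correctly proves $\|g\|_{(X+Y)'}=\bigl(\|g|_X\|_{X'}^2+\|g|_Y\|_{Y'}^2\bigr)^{1/2}$, but by \eqref{interNorm} that quantity is the \emph{intersection} norm $\|g\|_{X'\cap Y'}$, not the \emph{sum} norm $\|g\|_{X'+Y'}=\inf_{g=g_1+g_2}(\|g_1\|_{X'}^2+\|g_2\|_{Y'}^2)^{1/2}$ of \eqref{sumNorm}; these genuinely differ (take $X=Y$: then $\|g\|_{(X+Y)'}=\sqrt{2}\,\|g\|_{X'}$ while $\|g\|_{X'+Y'}=\|g\|_{X'}/\sqrt{2}$). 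So what you have established is the duality $(X+Y)'=X'\cap Y'$, whereas the identity the paper actually invokes in the proof of Theorem \ref{theo:SchurEquivNorm} is $\|g\|_{(X_1\cap X_2)'}=\|g\|_{X_1'+X_2'}$ (the statement of \eqref{dualTrans} appears to carry a typo, with $X+Y$ where $X\cap Y$ is meant). That other duality is not obtained by restriction: one must \emph{split} a functional $g$ on $X\cap Y$ as $g=g_1+g_2$ with $g_1\in X'$, $g_2\in Y'$ realizing the infimum, which in the Hilbert setting is done via the Riesz representation of $g$ with respect to the inner product of $X\cap Y$ (or by Hahn--Banach extension from the diagonal of $X\times Y$), together with the standing assumption that $X\cap Y$ is dense in $X$ and in $Y$. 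Your proposal as written does not supply this step, so the version of the lemma that the paper relies on remains unproved.
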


The Schur complement of the final condensed system \eqref{condensedM} appears to be too complicated to be analyzed. However, in the static condensation process, the Schur complement stays the same before and after $\matrixU^o$ is condensed out, which significantly simplifies our analysis. A similar idea was used in \cite{rhebergen2018preconditioning} for a different HDG scheme for the Stokes problem.
We first quote a lemma about block matrix inverse, the proof of which we refer to \cite[Chapter 0.7.3]{johnson1985matrix}:
\begin{lemma}
	\label{lemma:matInv}
	Assume a block matrix $\mathbf{R}$ in the form of
	\begin{equation*}
		\mathbf{R}=
		\begin{bmatrix}[1.5]
			\matrixA & \matrixB \\
			\matrixC & \matrixD
		\end{bmatrix},
	\end{equation*}
	and the submatrices $\matrixA$ and $\matrixD$ are both invertible, then the inverse of $\mathbf{R}$ is expressed as
	\begin{equation*}
		\mathbf{R}^{-1}=
		\begin{bmatrix}[1.5]
			(\matrixA-\matrixB\matrixD^{-1}\matrixC)^{-1} & -\matrixA^{-1}\matrixB(\matrixD-\matrixC\matrixA^{-1}\matrixB)^{-1} \\
			-(\matrixD-\matrixC\matrixA^{-1}\matrixB)^{-1}\matrixC\matrixA^{-1} & (\matrixD-\matrixC\matrixA^{-1}\matrixB)^{-1}
		\end{bmatrix}.
	\end{equation*}
\end{lemma}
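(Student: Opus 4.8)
The statement is the classical block-inverse formula, so the plan is to reconstruct the short proof directly rather than appeal to the reference. The cleanest route is through the two block-triangular (Gauss) factorizations of $\mathbf{R}$. Using that $\matrixA$ is invertible, factor
\begin{equation*}
	\mathbf{R}=
	\begin{bmatrix}[1.5] \mathbf{I} & \mathbf{0} \\ \matrixC\matrixA^{-1} & \mathbf{I} \end{bmatrix}
	\begin{bmatrix}[1.5] \matrixA & \mathbf{0} \\ \mathbf{0} & \matrixD-\matrixC\matrixA^{-1}\matrixB \end{bmatrix}
	\begin{bmatrix}[1.5] \mathbf{I} & \matrixA^{-1}\matrixB \\ \mathbf{0} & \mathbf{I} \end{bmatrix},
\end{equation*}
invert each factor (the unit triangular ones by negating the off-diagonal block, the middle one blockwise), and multiply back. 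This immediately yields the $(2,2)$ block $(\matrixD-\matrixC\matrixA^{-1}\matrixB)^{-1}$, the $(2,1)$ block $-(\matrixD-\matrixC\matrixA^{-1}\matrixB)^{-1}\matrixC\matrixA^{-1}$, and the $(1,2)$ block $-\matrixA^{-1}\matrixB(\matrixD-\matrixC\matrixA^{-1}\matrixB)^{-1}$, exactly as claimed; the $(1,1)$ block comes out as $\matrixA^{-1}+\matrixA^{-1}\matrixB(\matrixD-\matrixC\matrixA^{-1}\matrixB)^{-1}\matrixC\matrixA^{-1}$, which still has to be identified with $(\matrixA-\matrixB\matrixD^{-1}\matrixC)^{-1}$.

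Second, I would run the mirror factorization that uses invertibility of $\matrixD$,
\begin{equation*}
	\mathbf{R}=
	\begin{bmatrix}[1.5] \mathbf{I} & \matrixB\matrixD^{-1} \\ \mathbf{0} & \mathbf{I} \end{bmatrix}
	\begin{bmatrix}[1.5] \matrixA-\matrixB\matrixD^{-1}\matrixC & \mathbf{0} \\ \mathbf{0} & \matrixD \end{bmatrix}
	\begin{bmatrix}[1.5] \mathbf{I} & \mathbf{0} \\ \matrixD^{-1}\matrixC & \mathbf{I} \end{bmatrix},
\end{equation*}
which gives the $(1,1)$ block in the form $(\matrixA-\matrixB\matrixD^{-1}\matrixC)^{-1}$ and the $(1,2)$ block as $-(\matrixA-\matrixB\matrixD^{-1}\matrixC)^{-1}\matrixB\matrixD^{-1}$. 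Matching the two expressions for the $(1,2)$ block (and, symmetrically, for $(2,1)$) amounts to the ``push-through'' identity $(\matrixA-\matrixB\matrixD^{-1}\matrixC)\,\matrixA^{-1}\matrixB=\matrixB\matrixD^{-1}(\matrixD-\matrixC\matrixA^{-1}\matrixB)$, which is one line of expansion; rearranging the same identity (this is just the Woodbury formula) turns the first factorization's $(1,1)$ block into $(\matrixA-\matrixB\matrixD^{-1}\matrixC)^{-1}$. Equivalently, one can skip the bookkeeping altogether and just verify $\mathbf{R}\mathbf{R}^{-1}=\mathbf{I}$ and $\mathbf{R}^{-1}\mathbf{R}=\mathbf{I}$ by block multiplication of the claimed matrix, using the same push-through identity to collapse the cross terms.

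The only genuinely delicate point — the ``obstacle'' — is well-definedness: the claimed formula contains $(\matrixD-\matrixC\matrixA^{-1}\matrixB)^{-1}$ and $(\matrixA-\matrixB\matrixD^{-1}\matrixC)^{-1}$, and invertibility of $\matrixA$ and $\matrixD$ alone guarantees neither these Schur complements nor $\mathbf{R}$ are invertible (take $\matrixA=\matrixD=\matrixB=\matrixC=\mathbf{I}$). The hypothesis must therefore be read with $\mathbf{R}$ itself assumed invertible; then the first triangular factorization gives $\det\mathbf{R}=\det\matrixA\,\det(\matrixD-\matrixC\matrixA^{-1}\matrixB)$, so the first Schur complement is invertible, and the mirror factorization does the same for the second. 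Once those existence statements are in place the remaining work is purely mechanical algebra, which is presumably why the paper simply cites \cite{johnson1985matrix}.
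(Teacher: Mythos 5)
Your proof is correct, and it is worth noting that the paper itself offers no argument for this lemma at all---it simply cites Horn and Johnson---so the standard two-sided block-LDU derivation you reconstruct is exactly the intended content. Both factorizations check out, the push-through identity $(\matrixA-\matrixB\matrixD^{-1}\matrixC)\matrixA^{-1}\matrixB=\matrixB\matrixD^{-1}(\matrixD-\matrixC\matrixA^{-1}\matrixB)$ is the right one-line bridge between the two forms of the off-diagonal and $(1,1)$ blocks, and the equivalent strategy of verifying $\mathbf{R}\mathbf{R}^{-1}=\mathbf{R}^{-1}\mathbf{R}=\mathbf{I}$ directly would also suffice. Your observation about well-definedness is a genuine catch rather than pedantry: with only $\matrixA$ and $\matrixD$ invertible the statement is false as written (your example $\matrixA=\matrixB=\matrixC=\matrixD=\mathbf{I}$ makes $\mathbf{R}$ singular), so the lemma implicitly requires $\mathbf{R}$ invertible, from which $\det\mathbf{R}=\det\matrixA\,\det(\matrixD-\matrixC\matrixA^{-1}\matrixB)=\det\matrixD\,\det(\matrixA-\matrixB\matrixD^{-1}\matrixC)$ gives invertibility of both Schur complements. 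In the paper's only application (Lemma 3.4) the matrix $\matrixA'$ is SPD, so every principal submatrix and every Schur complement is automatically SPD and the hypothesis is harmlessly satisfied there, but the standalone statement should carry the extra assumption you identified.
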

Then, we prove the following result:
\begin{lemma}
	\label{lem:equivSchur}
	The Schur complement of the HDG scheme with only $\matrixP^o$ condensed out in \eqref{nopoM} is the same as that of the final condensed system \eqref{condensedM}.
\end{lemma}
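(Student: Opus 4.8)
The statement is the linear-algebra fact that the Schur complement onto the pressure block is unchanged when the interior velocity block $\matrixU^o$ is condensed out first; the block structure inherited from the space splitting \eqref{spaceRelation} will make the identification completely explicit. The plan is to write the Schur complement onto $\overline{\matrixP}$ of \eqref{nopoM} as $\mathbf{S}_1:=-\matrixC'+\matrixB'^{\trans}(\matrixA')^{-1}\matrixB'$, so that what must be shown is $\mathbf{S}_1=\mathbf{S}_g=-\matrixC_g+\matrixB_g^{\trans}\matrixA_g^{-1}\matrixB_g$. Since $\matrixC'=\matrixC_{\overline p\,\overline p}=\matrixC_g$ by construction, it is enough to prove $\matrixB'^{\trans}(\matrixA')^{-1}\matrixB'=\matrixB_g^{\trans}\matrixA_g^{-1}\matrixB_g$.

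First I would split the unknowns of $\matrixA'$ into the two groups $\matrixU^o$ and $(\matrixU^\partial,\matrixUhat)$ and partition $\matrixA'$ accordingly as a $2\times 2$ block matrix: its $(1,1)$-block is $\mathbf{E}=\matrixA_{u^ou^o}+\mathbf{D}_{u^o u^o}$, its $(2,2)$-block, call it $\matrixA_{22}$, is the velocity block assembled from $\matrixA_{u^\partial u^\partial},\matrixA_{u^\partial\widehat u},\matrixA_{\widehat u u^\partial},\matrixA_{\widehat u\widehat u}$, and its off-diagonal blocks are $\matrixA_{12}=[\matrixA_{u^o u^\partial}\ \matrixA_{u^o\widehat u}]$ and $\matrixA_{21}=\matrixA_{12}^{\trans}$. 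Both $\mathbf{E}$ and $\matrixA'$ are invertible: $\mathbf{E}$ is SPD, being a principal submatrix of the SPD matrix $\matrixA$ plus the symmetric positive semi-definite block $\mathbf{D}_{u^o u^o}$, and $\matrixA'$ is SPD as noted after \eqref{nopoM}. Hence Lemma \ref{lemma:matInv} applies to this partition and identifies the $(2,2)$-block of $(\matrixA')^{-1}$ as $\big(\matrixA_{22}-\matrixA_{21}\mathbf{E}^{-1}\matrixA_{12}\big)^{-1}$; comparing with the definition of $\matrixA_g$ in \eqref{condensedM}, this block is exactly $\matrixA_g^{-1}$.

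The second ingredient is the zero pattern of $\matrixB'$. By the orthogonality $b(\overline p_h,\bldu_h^o)=-(\overline p_h,\nabla\cdot\bldu_h^o)_{\Th}=0$ for all $\overline p_h\in\overline{Q}_h$ and $\bldu_h^o\in\bldV_h^{k,o}$ — the same relation used to order the unknowns in \eqref{reorderM} — the $\matrixU^o$-row of $\matrixB'$ vanishes, so in the partition above $\matrixB'$ has a zero $\matrixU^o$-component and $\matrixB_g$ as its $(\matrixU^\partial,\matrixUhat)$-component. Consequently, in the product $\matrixB'^{\trans}(\matrixA')^{-1}\matrixB'$ only the $(2,2)$-block of $(\matrixA')^{-1}$ contributes, which yields $\matrixB'^{\trans}(\matrixA')^{-1}\matrixB'=\matrixB_g^{\trans}\matrixA_g^{-1}\matrixB_g$. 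Together with $\matrixC'=\matrixC_g$, this gives $\mathbf{S}_1=\mathbf{S}_g$.

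I expect no analytic obstacle here; the argument is purely algebraic once the vanishing of the $\matrixU^o$-block of $\matrixB'$ is invoked. The only thing to watch is the bookkeeping: keeping the three-way partition $(\matrixU^o\mid\matrixU^\partial,\matrixUhat\mid\overline{\matrixP})$ consistent throughout, verifying that the invertibility hypotheses of Lemma \ref{lemma:matInv} hold for $\mathbf{E}$ and $\matrixA'$, and matching the $(2,2)$-block produced by Lemma \ref{lemma:matInv} with the stated closed form of $\matrixA_g$.
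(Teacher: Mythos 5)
Your proof is correct and follows essentially the same route as the paper: both apply the block-inverse formula of Lemma \ref{lemma:matInv} to the $\matrixU^o$ versus $(\matrixU^\partial,\matrixUhat)$ partition of $\matrixA'$ and use the zero pattern of $\matrixB'$ so that only the $(2,2)$-block $(\matrixA_{22}-\matrixA_{21}\mathbf{E}^{-1}\matrixA_{12})^{-1}=\matrixA_g^{-1}$ contributes. Your explicit check of the invertibility hypotheses for $\mathbf{E}$ and $\matrixA'$ is a small but welcome addition to the paper's more terse computation.
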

\begin{proof}
		Since the stiffness matrix $\matrixA'$ is SPD, it is straightforward to verify that the matrix remains SPD {after $\matrixU^o$ is condensed out.}
		 By Lemma \ref{lemma:matInv}, the (negative) Schur complement on the space $\overline{Q}_h$ of the HDG scheme in $\eqref{nopoM}$ is expressed as
		\begin{align*}
			\mathbf{S}' &= -\matrixC_{\overline{p}\;\overline{p}}+
			\begin{bmatrix}[1.5]
				\mathbf{0} & \matrixB_{u^\partial \overline{p}}^\trans & \mathbf{0}
			\end{bmatrix}
			\begin{bmatrix}[1.5]
				\matrixA_{u^o u^o}+\mathbf{D}_{u^o u^o} &  \matrixA_{u^o u^\partial} & \matrixA_{u^o \widehat{u}} \\
				\matrixA_{u^\partial u^o} & \matrixA_{u^\partial u^\partial} & \matrixA_{u^\partial \widehat{u}} \\
				\matrixA_{\widehat{u} u^o}  & \matrixA_{\widehat{u} u^\partial} & \matrixA_{\widehat{u}\widehat{u}}
			\end{bmatrix}^{-1}
			\begin{bmatrix}[1.5]
				\mathbf{0} \\
				\matrixB_{u^\partial \overline{p}} \\
				\mathbf{0}
			\end{bmatrix}\\
			&=
			-\matrixC_g + \matrixB_g^\trans
			(
			\begin{bmatrix}[1.5]
				\matrixA_{u^\partial u^\partial} & \matrixA_{u^\partial \widehat{u}} \\
				\matrixA_{\widehat{u} u^\partial} & \matrixA_{\widehat{u}\widehat{u}}
			\end{bmatrix}
			-
			\begin{bmatrix}[1.5]
				\matrixA_{u^\partial u^o} \\
				\matrixA_{\widehat{u} u^o}
			\end{bmatrix}
			\mathbf{E}^{-1}
			\begin{bmatrix}
				\matrixA_{u^o u^\partial} & \matrixA_{u^o \widehat{u}}
			\end{bmatrix}
			)^{-1}
			\matrixB_g\\
			&=
			-\matrixC_g+\matrixB_g^\trans\matrixA_g\matrixB_g,
		\end{align*}
		{where $\mathbf{E}=\matrixA_{u^o u^o}+\mathbf{D}_{u^o u^o}$}.
\end{proof}

From now on, we directly work with the (negative) Schur complement $\mathbf{S'}$ of the HDG scheme in \eqref{nopoM}, which is the same as $\mathbf{S}_g$ from Lemma \ref{lem:equivSchur}. Inspired by the ideas in \cite{Mardal04, Olshanskii06}, we need to define a parameter-dependent norm on the finite element space $\overline{Q}_h$ to obtain the upper and lower spectral bound of $\mathbf{S}'$ independent of model parameters and mesh size.
We start from the norm defined by the SPD (negative) Schur complement $\mathbf{S}'$.
\begin{lemma}
    \label{lem:Snorm}
    For all $\overline{p}_h\in\overline{Q}_h$, we have:
    \begin{align*}
		\langle\mathbf{S}'\overline{\matrixP},\overline{\matrixP}\rangle_2
		&\simeq
		\frac{1}{\lambda}\|\overline{p}_{h}\|_{\Th}^2 + \sup_{\linev_h\in\lineV_{h,0}^k}\frac{(\overline{p}_{h},\nabla\cdot\bldv_h)^2_{\Th}}{\tau\|\bldv_h\|_{\Th}^2+2\mu|\!|\!|\underline{\bldv}_h|\!|\!|_{1,h}^2+\lambda\|\nabla\cdot\bldv_h^o\|_{\Th}^2}.
	\end{align*}
\end{lemma}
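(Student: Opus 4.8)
The plan is to identify $\langle \mathbf{S}'\overline{\matrixP},\overline{\matrixP}\rangle_2$ with the norm of $\overline{p}_h$ in the dual of a suitable velocity space, and then to pass from that dual-norm description to the stated supremum plus the $\frac{1}{\lambda}\|\overline{p}_h\|_{\Th}^2$ term. Concretely, recall from Lemma \ref{lem:equivSchur} that $\mathbf{S}'=-\matrixC_g+\matrixB_g^\trans(\matrixA')^{-1}\matrixB_g$ acts on $\overline{Q}_h$, where $\matrixA'$ is the SPD stiffness matrix whose associated bilinear form is $\mathcal{A}(\linev_h,\linev_h):=\lambda\|\nabla\cdot\bldv_h^o\|_{\Th}^2+a(\linev_h,\linev_h)$ on $\lineV_{h,0}^k$, and $-\matrixC_g$ corresponds to $\frac{1}{\lambda}\|\cdot\|_{\Th}^2$ on $\overline{Q}_h$. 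By the standard algebraic identity for Schur complements of SPD saddle-point systems,
\begin{align*}
  \langle \mathbf{S}'\overline{\matrixP},\overline{\matrixP}\rangle_2
  = \frac{1}{\lambda}\|\overline{p}_h\|_{\Th}^2
  + \sup_{\linev_h\in\lineV_{h,0}^k}\frac{b(\overline{p}_h,\bldv_h)^2}{\mathcal{A}(\linev_h,\linev_h)},
\end{align*}
so in fact the identity holds with constant $1$ once we use the energy norm $\mathcal{A}(\cdot,\cdot)$ in the denominator. The only remaining task is therefore to replace $\mathcal{A}(\linev_h,\linev_h)$ by the explicit quantity $\tau\|\bldv_h\|_{\Th}^2+2\mu|\!|\!|\underline{\bldv}_h|\!|\!|_{1,h}^2+\lambda\|\nabla\cdot\bldv_h^o\|_{\Th}^2$, which follows immediately from the norm equivalence \eqref{aNorm}, i.e.\ $a(\linev_h,\linev_h)\simeq |\!|\!|\underline{\bldv}_h|\!|\!|_{\ast,h}^2 = \tau\|\bldv_h\|_{\Th}^2+2\mu|\!|\!|\underline{\bldv}_h|\!|\!|_{1,h}^2$. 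Since the two denominators are spectrally equivalent with constants depending only on shape regularity and $k$, the supremum changes by at most those constants, giving the $\simeq$ in the statement.

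The first step I would carry out is to make the algebraic Schur-complement-as-supremum identity precise: for an SPD block system $\begin{bmatrix}\matrixA' & \matrixB'\\ \matrixB'^\trans & \matrixC'\end{bmatrix}$ with $\matrixC'=\matrixC_{\overline{p}\,\overline{p}}$ symmetric negative semidefinite, one has $\langle(-\matrixC'+\matrixB'^\trans(\matrixA')^{-1}\matrixB')\overline{\matrixP},\overline{\matrixP}\rangle_2 = -\langle\matrixC'\overline{\matrixP},\overline{\matrixP}\rangle_2 + \max_{\matrixlineV}\big(2\langle\matrixB'\overline{\matrixP},\matrixlineV\rangle_2 - \langle\matrixA'\matrixlineV,\matrixlineV\rangle_2\big)$, and the maximizer is $\matrixlineV=(\matrixA')^{-1}\matrixB'\overline{\matrixP}$, yielding $\langle\matrixB'\overline{\matrixP},(\matrixA')^{-1}\matrixB'\overline{\matrixP}\rangle_2 = \sup_{\matrixlineV\neq 0}\langle\matrixB'\overline{\matrixP},\matrixlineV\rangle_2^2/\langle\matrixA'\matrixlineV,\matrixlineV\rangle_2$. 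Translating matrices back to bilinear forms via the definitions in Section \ref{sec:prelimi} (and noting $\langle\matrixB'\overline{\matrixP},\matrixlineV\rangle_2 = b(\overline{p}_h,\bldv_h)$, which only sees $\bldv_h$, not $\bldvhat_h$) gives the displayed identity with the energy denominator $\mathcal{A}$. Then I would invoke \eqref{aNorm} to swap $a(\linev_h,\linev_h)$ for $|\!|\!|\underline{\bldv}_h|\!|\!|_{\ast,h}^2$ inside $\mathcal{A}$, and conclude.

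The main obstacle is a bookkeeping one rather than a deep one: one must be careful that the supremum in the statement runs over the \emph{full} compound space $\lineV_{h,0}^k=\bldV_{h,0}^k\times\bldVhat_{h,0}^{k-1}$, whereas $b(\overline{p}_h,\bldv_h)$ depends only on the $H(\mathrm{div})$-component $\bldv_h$ and the denominator contains $2\mu h^{-1}\|\mathsf{tang}(\bldv_h-\bldvhat_h)\|_{\partial K}^2$. For a fixed $\bldv_h$, the optimal $\bldvhat_h$ minimizing the denominator is the $L^2$-projection $P_{k-1}\mathsf{tang}(\bldv_h)$ (or simply $\mathsf{tang}(\bldv_h)$ up to the projection, which is how the stabilization term is written), so restricting the supremum to the component does not change it — but this should be stated. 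One must also note that the static-condensation reduction used in \eqref{nopoM} is over the \emph{global} space, and the decomposition $\bldv_h = \bldv_h^\partial + \bldv_h^o$ with $\nabla\cdot\bldv_h^\partial\in\overline{Q}_h$, $\nabla\cdot\bldv_h^o\in Q_h^{k-1,o}$ from \eqref{spaceRelation} is what makes the $\lambda\|\nabla\cdot\bldv_h^o\|_{\Th}^2$ term appear cleanly (the $\lambda$-term acts only on the local part, consistent with Lemma \ref{lemma:Doo}). Beyond that, no elliptic regularity or inf-sup argument is needed here — that enters only in the subsequent spectral-equivalence lemma — so this lemma is essentially the algebraic identity plus the already-established norm equivalence \eqref{aNorm}.
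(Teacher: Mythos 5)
Your proposal is correct and follows essentially the same route as the paper: both reduce $\langle\mathbf{S}'\overline{\matrixP},\overline{\matrixP}\rangle_2$ to $\frac{1}{\lambda}\|\overline{p}_h\|_{\Th}^2$ plus the supremum of $b(\overline{p}_h,\bldv_h)^2$ over the energy norm induced by $\matrixA'$ (the paper writes this via $\matrixA'^{-1/2}$ and a Euclidean Cauchy--Schwarz characterization rather than your quadratic-maximization identity, but these are interchangeable), and then invoke the norm equivalence \eqref{aNorm} to replace the energy denominator by the explicit expression, which is exactly where the $\simeq$ enters. Your side remarks about the supremum over the compound space and the role of $\lambda\|\nabla\cdot\bldv_h^o\|_{\Th}^2$ are consistent with the paper's treatment.
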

\begin{proof}
    Since $\matrixA'$ in \eqref{nopoM} is SPD, $\matrixA_{}^{\prime -1/2}$ is also SPD. Denote $n$ as the number of DOFs of $\underline{\bldV}_{h,0}^k$. Combined with the definition of matrix representations of bilinear forms and the norm defined by $a(\cdot,\;\cdot)$ in \eqref{aNorm}, we have:
	\begin{align*}
		\langle\mathbf{S}'\overline{\matrixP},\overline{\matrixP}\rangle_2
		&=
		\langle(-\matrixC'+\matrixB'^\trans\matrixA^{\prime -1}\matrixB')\overline{\matrixP},\overline{\matrixP}\rangle_2\\
		&=
		\frac{1}{\lambda}\|\overline{p}_{h}\|_{\Th}^2+\langle\matrixA^{\prime -1/2}\matrixB'\overline{\matrixP},\matrixA^{\prime -1/2}\matrixB'\overline{\matrixP}\rangle_2 \\
		&=
		\frac{1}{\lambda}\|\overline{p}_{h}\|_{\Th}^2 + \sup_{\matrixlineV\in\mathbb{R}^{n}}\frac{\langle\matrixA^{\prime -1/2}\matrixB'\overline{\matrixP},\matrixlineV\rangle_2^2}{\langle\matrixlineV,\matrixlineV\rangle_2} \\
		&=
		\frac{1}{\lambda}\|\overline{p}_{h}\|_{\Th}^2 + \sup_{\matrixlineV\in\mathbb{R}^{n}}\frac{\langle\matrixB'\overline{\matrixP},\matrixA^{\prime -1/2}\matrixlineV\rangle_2^2}{\langle\matrixlineV,\matrixlineV\rangle_2} \\
		&=
		\frac{1}{\lambda}\|\overline{p}_{h}\|_{\Th}^2 + \sup_{\underline{\mathbf{W}}\in\mathbb{R}^{n}}\frac{\langle\matrixB'\overline{\matrixP},\underline{\mathbf{W}}\rangle_2^2}{\langle\matrixA^{\prime}\underline{\mathbf{W}},\underline{\mathbf{W}}\rangle_2} \\
		&\simeq
		\frac{1}{\lambda}\|\overline{p}_{h}\|_{\Th}^2 + \sup_{\linev_h\in\lineV_{h,0}^k}\frac{(\overline{p}_{h},\nabla\cdot\bldv_h)^2_{\Th}}{\tau\|\bldv_h\|_{\Th}^2+2\mu|\!|\!|\underline{\bldv}_h|\!|\!|_{1,h}^2+\lambda\|\nabla\cdot\bldv_h^o\|_{\Th}^2}.
	\end{align*}
\end{proof}

Next, we introduce a parameter-dependent norm on the element-wise constant space $\overline{Q}_h$:
\begin{equation}
	\label{pbarNorm}
	|\overline{p}_{h}|_\ast^2:=
	\frac{1}{\lambda}\|\overline{p}_{h}\|_{\Th}^2+\inf_{\overline{p}_{h}=\overline{p}_{h,1}+\overline{p}_{h,2}}\left(\frac{1}{2\mu}\|\overline{p}_{h,1}\|_{\Th}^2+\frac{1}{\tau h}\|\jmp{\overline{p}_{h,2}}\|_{\Eh}^2\right).
\end{equation}
Before proceeding to the parameter-independent stability and boundedness of $\mathbf{S}'$ with respect to $|\overline{p}_{h}|_\ast$, we need to prove two inf-sup conditions.

\begin{lemma}
    \label{lem:infsup1}
    For all $\overline{p}_h\in\overline{Q}_h$, we have:
    \begin{align*}
        \sup_{\linev_h\in\lineV_{h,0}^k}\frac{(\overline{p}_{h},\nabla\cdot\bldv_h)_{\Th}^2}{2\mu|\!|\!|\underline{\bldv}_h|\!|\!|_{1,h}^2+\lambda\|\nabla\cdot\bldv_h^o\|_{\Th}^2}
        \gtrsim
        \frac{1}{2\mu}\|\overline{p}_{h}\|_{\Th}^2.
    \end{align*}
\end{lemma}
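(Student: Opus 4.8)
The plan is, for a given $\overline{p}_h\in\overline{Q}_h$, to exhibit a single admissible test pair $\linev_h=(\bldv_h,\bldvhat_h)\in\lineV_{h,0}^k$ whose velocity component has divergence exactly equal to $\overline{p}_h$, whose energy $|\!|\!|\underline{\bldv}_h|\!|\!|_{1,h}$ is controlled by $\|\overline{p}_h\|_{\Th}$, and — crucially — for which the extra term $\lambda\|\nabla\cdot\bldv_h^o\|_{\Th}^2$ in the denominator of the supremum vanishes identically. Plugging this one candidate into the supremum then gives the bound, with constant independent of $\mu$, $\lambda$, $\tau$ and $h$.

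First I would build a continuous lifting. Using that $\overline{p}_h$ has zero mean (we work on $\overline{Q}_{h,0}$), the surjectivity of $\nabla\cdot:[H_0^1(\Omega)]^d\to L_0^2(\Omega)$ on the convex polytopal domain provides $\bld{w}\in[H_0^1(\Omega)]^d$ with $\nabla\cdot\bld{w}=\overline{p}_h$ and $\|\bld{w}\|_{1,\Omega}\lesssim\|\overline{p}_h\|_{\Th}$. Next I discretize: let $\Pi_h^{\mathrm{div}}$ be the canonical BDM interpolation onto $\bldV_h^k$ and set $\bldv_h:=\Pi_h^{\mathrm{div}}\bld{w}$. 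Since $\bld{w}\cdot\bld{n}=0$ on $\partial\Omega$ we have $\bldv_h\in\bldV_{h,0}^k$, and the commuting-diagram property $\nabla\cdot\Pi_h^{\mathrm{div}}\bld{w}=P_{k-1}(\nabla\cdot\bld{w})$ together with $\nabla\cdot\bld{w}=\overline{p}_h\in Q_h^0\subset Q_h^{k-1}$ gives $\nabla\cdot\bldv_h=\overline{p}_h$ exactly. For the facet component take $\bldvhat_h\in\bldVhat_{h,0}^{k-1}$ to be the facet-wise $L^2$-projection of the (single-valued, as $\bld{w}\in H^1$) tangential trace $\mathsf{tang}(\bld{w})$; since $\bld{w}|_{\partial\Omega}=0$, $\bldvhat_h$ vanishes on boundary facets, so $\linev_h\in\lineV_{h,0}^k$.

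Then I would bound $|\!|\!|\underline{\bldv}_h|\!|\!|_{1,h}^2=\sum_{K\in\Th}\|\bld{D}(\bldv_h)\|_K^2+\tfrac1h\|\mathsf{tang}(\bldv_h-\bldvhat_h)\|_{\partial K}^2$ by $\|\bld{w}\|_{1,\Omega}^2$, element by element. Writing $\bldv_h-\overline{\bld{w}}_K=\Pi_h^{\mathrm{div}}(\bld{w}-\overline{\bld{w}}_K)$ (using that $\Pi_h^{\mathrm{div}}$ reproduces constants) and combining an inverse inequality, the standard estimate $\|\bld{w}-\Pi_h^{\mathrm{div}}\bld{w}\|_{0,K}\lesssim h_K|\bld{w}|_{1,K}$ and Poincaré gives $\|\bld{D}(\bldv_h)\|_K\le\|\nabla\bldv_h\|_K\lesssim|\bld{w}|_{1,K}$; a scaled trace inequality gives $\|\bldv_h-\bld{w}\|_{0,\partial K}\lesssim h_K^{1/2}|\bld{w}|_{1,K}$. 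Splitting $\mathsf{tang}(\bldv_h-\bldvhat_h)=\mathsf{tang}(\bldv_h-\bld{w})+(\mathsf{tang}(\bld{w})-\bldvhat_h)$, the first piece is handled by the trace estimate just mentioned, and for the second, since constant tangential fields lie in the facet space, $\|\mathsf{tang}(\bld{w})-\bldvhat_h\|_{0,\partial K}\le\|\mathsf{tang}(\bld{w}-\overline{\bld{w}}_K)\|_{0,\partial K}\lesssim h_K^{1/2}|\bld{w}|_{1,K}$. Summing over $K$ yields $|\!|\!|\underline{\bldv}_h|\!|\!|_{1,h}^2\lesssim\|\bld{w}\|_{1,\Omega}^2\lesssim\|\overline{p}_h\|_{\Th}^2$. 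Finally I observe that the local divergence term is zero: splitting $\bldv_h=\bldv_h^\partial+\bldv_h^o$ along $\bldV_h^k=\bldV_h^{k,\partial}\oplus\bldV_h^{k,o}$, by \eqref{spaceRelation} we have $\nabla\cdot\bldv_h^\partial\in\overline{Q}_h$ and $\nabla\cdot\bldv_h^o\in Q_h^{k-1,o}$, two $L^2(\Omega)$-orthogonal subspaces; since $\nabla\cdot\bldv_h=\overline{p}_h\in\overline{Q}_h$, its $Q_h^{k-1,o}$-component $\nabla\cdot\bldv_h^o$ must vanish. Hence
\[\sup_{\linev_h\in\lineV_{h,0}^k}\frac{(\overline{p}_h,\nabla\cdot\bldv_h)_{\Th}^2}{2\mu|\!|\!|\underline{\bldv}_h|\!|\!|_{1,h}^2+\lambda\|\nabla\cdot\bldv_h^o\|_{\Th}^2}\ \ge\ \frac{\|\overline{p}_h\|_{\Th}^4}{2\mu|\!|\!|\underline{\bldv}_h|\!|\!|_{1,h}^2}\ \gtrsim\ \frac{1}{2\mu}\|\overline{p}_h\|_{\Th}^2,\]
which is the claim.

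The main obstacle I expect is the energy bound of Step 3, specifically the boundary term $\tfrac1h\|\mathsf{tang}(\bldv_h-\bldvhat_h)\|_{\partial K}^2$, and in particular verifying that the argument is uniform over the two definitions of the facet space $\bldVhat_h^{k-1}$ — one must check that the facet-wise $L^2$-projection reproduces (tangential) constants also for the special $k=1$, $d=3$ space $[\poly^0(F)]^3\oplus\bld{x}\times[\poly^0(F)]^3$. The remaining ingredients — the continuous inf-sup, the BDM commuting diagram, and standard scaling/inverse/trace/Poincaré estimates — are routine. (The identity $\nabla\cdot\bldv_h^o=0$ is what makes the $\lambda$-term harmless and is the only point where the structure \eqref{spaceRelation} of the hierarchical splitting is essential.)
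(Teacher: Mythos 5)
Your proof is correct, and it reaches the bound by a genuinely different mechanism than the paper's in one key place: the treatment of the term $\lambda\|\nabla\cdot\bldv_h^o\|_{\Th}^2$. The paper simply restricts the supremum to the lowest-order subspace $\lineV_{h,0}^1\subset\lineV_{h,0}^k$, where $\bldv_h^o=\bld 0$ holds for every member of the space, and then invokes the discrete lifting of \cite[Lemma 2.3.1]{Lehrenfeld10} to produce a $\bld{w}_h$ with the correct divergence moments and controlled $|\!|\!|\cdot|\!|\!|_{1,h}$-norm. You instead stay in the full degree-$k$ space, arrange $\nabla\cdot\bldv_h=\overline{p}_h$ exactly via the BDM commuting-diagram property, and annihilate the $\lambda$-term through the $L^2$-orthogonality of $\overline{Q}_h$ and $Q_h^{k-1,o}$ in the hierarchical splitting \eqref{spaceRelation}; that observation is valid and equally effective. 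Your continuous lifting also differs: the paper solves a Neumann problem and uses $H^2$ elliptic regularity on the convex domain, whereas you use the right inverse of the divergence on $[H^1_0(\Omega)]^d$, which holds on any Lipschitz domain, so your version of this particular lemma does not need convexity (the paper still uses it elsewhere). The element-by-element energy bound $|\!|\!|\underline{\bldv}_h|\!|\!|_{1,h}\lesssim\|\bld{w}\|_{H^1}$ that you prove by hand is precisely the content of the cited lifting lemma, so nothing is lost there, and your check that constant tangential fields belong to the facet space (including the special $k=1$, $d=3$ case) is the right thing to verify. One shared caveat: as you correctly note, both constructions require $\overline{p}_h$ to have zero mean --- for a constant pressure the numerator vanishes identically over $\bldV_{h,0}^k$ --- so the lemma must be read on the zero-mean subspace, which is consistent with where it is applied.
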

\begin{proof}
    The proof procedure is similar to \cite[Proposition 2.3.5]{Lehrenfeld10}. 
    {For any $\overline{p}_h\in\overline{Q}_h$, assume $\phi\in H^2(\Omega)$ satisfies the Neumann problem $-\Delta\phi=\overline{p}_h$ with $\frac{\partial\phi}{\partial n}=0$ on $\partial\Omega$. We take $\bld{w}^\ast\in[H^1(\Omega)]^d$ such that $\bld{w}^\ast=\nabla\phi$ and we get $\|\bld{w}^\ast\|_{H^1}\lesssim\|\phi\|_{H^2}\lesssim\|\overline{p}_h\|_{L^2}$ from the elliptic regularity due to the convex domain $\Omega$. Then from \cite[Lemma 2.3.1]{Lehrenfeld10} there exists $\bld{w}_h\in\bldV_{h,0}^k$ satisfying $(q_h, \nabla\cdot\bld{w}_h)_{\Th}=(q_h, \nabla\cdot\bld{w}^\ast)_{\Th}$ for all $q_h\in Q_h^{k-1}$ and 
    $|\!|\!|\bld{w}_h|\!|\!|_{1,h}\lesssim\|\bld{w}^\ast\|^2_{H^1}$}.
    Since $k\ge 1$ in the $H$(div)-conforming HDG scheme and $\bldv_h^o=\bld{0}$ in the lowest order case, we have:
    \begin{align*}
    	\sup_{\linev_h\in\lineV_{h,0}^k}\frac{(\overline{p}_{h},\nabla\cdot\bldv_h)_{\Th}^2}{2\mu|\!|\!|\underline{\bldv}_h|\!|\!|_{1,h}^2+\lambda\|\nabla\cdot\bldv_h^o\|_{\Th}^2}
    	&\ge
    	\sup_{\linev_h\in\lineV_{h,0}^1}\frac{(\overline{p}_{h},\nabla\cdot\bldv_h)_{\Th}^2}{2\mu|\!|\!|\underline{\bldv}_h|\!|\!|_{1,h}^2} \\
    	&\ge
    	\frac{(\overline{p}_{h},\nabla\cdot\bld{w}_h)_{\Th}^2}{2\mu|\!|\!|\underline{\bld{w}}_h|\!|\!|_{1,h}^2}
    	\\
    	&\gtrsim
    	\frac{(\overline{p}_{h},\nabla\cdot\bld{w}^\ast)_{\Th}^2}{2\mu\|\bld{w}^\ast\|_{H^1}^2}
    	\\
    	&\gtrsim
    	\frac{\|\overline{p}_{h}\|_{\Th}^4}{2\mu\|\overline{p}_{h}\|_{\Th}^2} \\
    	&=
    	\frac{1}{2\mu}\|\overline{p}_{h}\|_{\Th}^2.
    \end{align*}
\end{proof}

\begin{lemma}
    \label{lem:infsup2}
    For all $\overline{p}_h\in\overline{Q}_h$, we have:
    \begin{align*}
        \sup_{\bldv_h\in\bldV_{h,0}^k}\frac{(\overline{p}_h,\nabla\cdot\bldv_h)_{\Th}^2}{\tau\|\bldv_h\|_{\Th}^2}
        &\gtrsim
        \frac{1}{\tau h}\|\jmp{\overline{p}_h}\|_{\Eh}^2.
    \end{align*}
\end{lemma}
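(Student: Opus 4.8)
The plan is to prove this by testing against an explicit function built from the lowest-order Raviart--Thomas basis functions already introduced in the hierarchical splitting of $\bldV_h^k$. The argument is purely local, and in contrast to Lemma \ref{lem:infsup1} it will require no elliptic regularity. Note first that $\tau$ cancels between the two sides, so it suffices to establish the parameter-free bound
\[
\sup_{\bldv_h\in\bldV_{h,0}^k}\frac{(\overline{p}_h,\nabla\cdot\bldv_h)_{\Th}^2}{\|\bldv_h\|_{\Th}^2}\;\gtrsim\;\frac{1}{h}\,\|\jmp{\overline{p}_h}\|_{\Eh}^2 .
\]

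First I would rewrite the numerator facet by facet. Since $\overline{p}_h$ is element-wise constant, $\bldv_h\cdot\bld{n}$ is single-valued across interior facets (because $\bldv_h\in H(\mathrm{div};\Omega)$) and $\bldv_h\cdot\bld{n}=0$ on $\partial\Omega$, the element-wise divergence theorem gives
\[
(\overline{p}_h,\nabla\cdot\bldv_h)_{\Th}=\sum_{K\in\Th}\overline{p}_h|_K\int_{\partial K}\bldv_h\cdot\bld{n}\,\mathrm{ds}=\sum_{F\in\Eh^o}j_F\int_F\bldv_h\cdot\bld{n}\,\mathrm{ds},\qquad j_F:=\jmp{\overline{p}_h}|_F\in\mathbb{R},
\]
where $\bld{n}$ is the fixed unit normal on $F$ used to define the jump.

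Next I would take $\bldv_h:=\sum_{F\in\Eh^o}j_F\,\phi_F^0$, where $\phi_F^0$ is the $\mathrm{RT0}$ basis function associated with the interior facet $F$, normalized so that $\int_F\phi_F^0\cdot\bld{n}\,\mathrm{ds}=1$, with zero normal component on every other facet and support on the two elements sharing $F$. Then $\bldv_h\in\bldV_{h,0}^k$ for every $k\ge 1$, and the identity above yields $(\overline{p}_h,\nabla\cdot\bldv_h)_{\Th}=\sum_{F\in\Eh^o}j_F^2$. For the denominator, a standard Piola-transform and shape-regularity scaling gives $\|\phi_F^0\|_K^2\simeq h_K^{2-d}\simeq h^{2-d}$; since each element has $d+1$ facets and each facet borders at most two elements, the triangle inequality gives $\|\bldv_h\|_{\Th}^2\lesssim h^{2-d}\sum_{F\in\Eh^o}j_F^2$, while $\|\jmp{\overline{p}_h}\|_{\Eh}^2=\sum_{F\in\Eh^o}|F|\,j_F^2\simeq h^{d-1}\sum_{F\in\Eh^o}j_F^2$ (the facet jump vanishes on $\Eh^\partial$). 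Combining the three relations,
\[
\frac{(\overline{p}_h,\nabla\cdot\bldv_h)_{\Th}^2}{\|\bldv_h\|_{\Th}^2}\gtrsim\frac{\big(\sum_{F}j_F^2\big)^2}{h^{2-d}\sum_{F}j_F^2}=h^{d-2}\sum_{F\in\Eh^o}j_F^2\simeq\frac{1}{h}\,\|\jmp{\overline{p}_h}\|_{\Eh}^2,
\]
and dividing by $\tau$ recovers the stated inequality.

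The only genuinely technical points are the scaling identity $\|\phi_F^0\|_K^2\simeq h_K^{2-d}$ and the sign bookkeeping: one must orient each $\phi_F^0$ consistently with the normal used to define $\jmp{\cdot}$ on $F$ so that all facet contributions to the numerator add constructively. I do not anticipate any serious obstacle, since the lemma is essentially a localized, parameter-free pressure-jump stability estimate of the kind standard in DG/HDG analyses.
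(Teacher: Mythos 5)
Your proof is correct and follows essentially the same route as the paper: both test the supremum with a lowest-order Raviart--Thomas field whose normal flux on each interior facet is driven by the pressure jump, convert $(\overline{p}_h,\nabla\cdot\bldv_h)_{\Th}$ to facet integrals by elementwise integration by parts, and conclude with the scaling $\|\phi_F^0\|_K^2\simeq h^{2-d}$ (equivalently, the paper's $\|\bld{w}_h\|_{\Th}^2\simeq h\|\bld{w}_h\cdot\bld{n}\|_{\Eh}^2$). The only difference is the normalization of the RT0 test function, which is immaterial.
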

\begin{proof}
    For any $\overline{p}_h\in\overline{Q}_h$, we construct $\bld{w}_h\in\mathrm{RT0}\subset\bldV_{h}^k$ such that $\bld{w}_h\cdot\bld{n}|_F=\jmp{\overline{p}_h}|_F$ for all $F\in\Eh^o$.
	By norm equivalence and standard scaling arguments, we have $\|\bld{w}_h\|_{\Th}^2\simeq h\|\bld{w}_h\cdot\bld{n}\|_{\Eh}^2$. Since normal components of functions in $\bldV_{h,0}^k$ are continuous across element facets, by integrating by parts we have:
    \begin{align*}
	\sup_{\bldv_h\in\bldV_{h,0}^k}\frac{(\overline{p}_h,\nabla\cdot\bldv_h)_{\Th}^2}{\tau\|\bldv_h\|_{\Th}^2}
	&=
	\sup_{\bldv_h\in\bldV_{h,0}^k}\frac{\langle\jmp{\overline{p}_h},\bldv_h\cdot\bld{n}\rangle_{\Eh}^2}{\tau\|\bldv_h\|_{\Th}^2} \\
	&\ge
	\frac{\langle\jmp{\overline{p}_h},\bld{w}_h\cdot\bld{n}\rangle_{\Eh}^2}{\tau\|\bld{w}_h\|_{\Th}^2} \\
	&\ge
	\frac{\|\jmp{\overline{p}_h}\|_{\Eh}^4}{\tau\|\bld{w}_h\|_{\Th}^2} \\
	&{\simeq}
	\frac{\|\overline{p}_h\|_{\Eh}^4}{\tau h\|\bld{w}_h\cdot\bld{n}\|_{\Eh}^2} \\
	&=
	\frac{1}{\tau h}\|\jmp{\overline{p}_h}\|_{\Eh}^2.
\end{align*}
\end{proof}

We are now ready to present the equivalence between the newly defined norm and the one induced by $\mathbf{S}'$ with the above properties.
\begin{theorem}[Equivalent Schur complement norm]
	\label{theo:SchurEquivNorm}
	\[
		\langle\mathbf{S}'\overline{\matrixP},\overline{\matrixP}\rangle_2
		\simeq
		|\overline{p}_{h}|_\ast^2,\quad\forall \overline{p}_{h}\in\overline{Q}_h.
	\]
\end{theorem}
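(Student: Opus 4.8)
The plan is to combine Lemma~\ref{lem:Snorm} with the sum/intersection duality set up in Section~\ref{sec:precondition} (Lemma~\ref{lemma:mapbound}), using Lemmas~\ref{lem:infsup1} and~\ref{lem:infsup2} as the two crucial spectral estimates, in the spirit of \cite{Mardal04, Olshanskii06}. Write $N(\overline{p}_h):=\sup_{\linev_h\in\lineV_{h,0}^k}|(\overline{p}_h,\nabla\cdot\bldv_h)_{\Th}|\,\big(\tau\|\bldv_h\|_{\Th}^2+2\mu|\!|\!|\underline{\bldv}_h|\!|\!|_{1,h}^2+\lambda\|\nabla\cdot\bldv_h^o\|_{\Th}^2\big)^{-1/2}$ and $M(\overline{p}_h):=\big(\inf_{\overline{p}_h=\overline{p}_{h,1}+\overline{p}_{h,2}}(\tfrac{1}{2\mu}\|\overline{p}_{h,1}\|_{\Th}^2+\tfrac{1}{\tau h}\|\jmp{\overline{p}_{h,2}}\|_{\Eh}^2)\big)^{1/2}$. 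By Lemma~\ref{lem:Snorm}, $\langle\mathbf{S}'\overline{\matrixP},\overline{\matrixP}\rangle_2\simeq\tfrac1\lambda\|\overline{p}_h\|_{\Th}^2+N(\overline{p}_h)^2$, and by definition $|\overline{p}_h|_\ast^2=\tfrac1\lambda\|\overline{p}_h\|_{\Th}^2+M(\overline{p}_h)^2$, so the theorem reduces to $N(\overline{p}_h)\simeq M(\overline{p}_h)$. Constant functions lie in the kernel of $\nabla\cdot$ and contribute only the identical term $\tfrac1\lambda\|\cdot\|_{\Th}^2$ to both quantities (put the constant in the $\overline{p}_{h,2}$ slot, where the jump vanishes), so I may and do assume $\overline{p}_h$ has zero mean, which also makes $\overline{p}_h\mapsto(\overline{p}_h,\nabla\cdot(\cdot))_{\Th}$ injective.

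For the bound $N(\overline{p}_h)\lesssim M(\overline{p}_h)$ I would argue directly. Fix an arbitrary splitting $\overline{p}_h=\overline{p}_{h,1}+\overline{p}_{h,2}$ and a test pair $\linev_h$. For the first piece, Cauchy--Schwarz together with $\|\nabla\cdot\bldv_h\|_{\Th}=\|\mathrm{tr}\,\bld{D}(\bldv_h)\|_{\Th}\le\sqrt{d}\,\big(\sum_{K\in\Th}\|\bld{D}(\bldv_h)\|_K^2\big)^{1/2}\le\sqrt{d}\,|\!|\!|\underline{\bldv}_h|\!|\!|_{1,h}$ gives $(\overline{p}_{h,1},\nabla\cdot\bldv_h)_{\Th}^2\lesssim\tfrac1{2\mu}\|\overline{p}_{h,1}\|_{\Th}^2\cdot 2\mu|\!|\!|\underline{\bldv}_h|\!|\!|_{1,h}^2$. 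For the second piece, integrating by parts element-wise, using $\nabla\overline{p}_{h,2}=\bld{0}$ on each element, $\bldv_h\cdot\bld{n}|_{\partial\Omega}=0$ and single-valuedness of $\bldv_h\cdot\bld{n}$ across interior facets, we get $(\overline{p}_{h,2},\nabla\cdot\bldv_h)_{\Th}=-\langle\jmp{\overline{p}_{h,2}},\bldv_h\cdot\bld{n}\rangle_{\Eh}$, so Cauchy--Schwarz and the scaled trace inequality $\|\bldv_h\cdot\bld{n}\|_{\Eh}^2\lesssim h^{-1}\|\bldv_h\|_{\Th}^2$ yield $(\overline{p}_{h,2},\nabla\cdot\bldv_h)_{\Th}^2\lesssim\tfrac1{\tau h}\|\jmp{\overline{p}_{h,2}}\|_{\Eh}^2\cdot\tau\|\bldv_h\|_{\Th}^2$. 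Adding the two estimates, dividing by the denominator of $N$, and taking the supremum over $\linev_h$ and then the infimum over splittings gives $N(\overline{p}_h)\lesssim M(\overline{p}_h)$.

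For the bound $N(\overline{p}_h)\gtrsim M(\overline{p}_h)$ I would use the abstract framework. Equip the velocity space $W:=\lineV_{h,0}^k$ with the two parameter-weighted Hilbert-space norms $\|\linev_h\|_{W_0}^2:=\tau\|\bldv_h\|_{\Th}^2$ and $\|\linev_h\|_{W_1}^2:=2\mu|\!|\!|\underline{\bldv}_h|\!|\!|_{1,h}^2+\lambda\|\nabla\cdot\bldv_h^o\|_{\Th}^2$; then $N(\overline{p}_h)$ is the norm of the functional $\ell:\linev_h\mapsto(\overline{p}_h,\nabla\cdot\bldv_h)_{\Th}$ on the intersection space $W_0\cap W_1$, and by the Hilbert-space sum/intersection duality $(W_0\cap W_1)'=W_0'+W_1'$ (\eqref{interNorm}--\eqref{dualTrans}, \cite[Chapter 2]{Bergh:1617905}) we have $N(\overline{p}_h)^2=\inf\{\|g_0\|_{W_0'}^2+\|g_1\|_{W_1'}^2:g_0+g_1=\ell\}$. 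If the decomposition is restricted to those induced by pressure splittings $\overline{p}_h=\overline{p}_{h,1}+\overline{p}_{h,2}$ (i.e.\ $g_1=(\overline{p}_{h,1},\nabla\cdot(\cdot))_{\Th}$, $g_0=(\overline{p}_{h,2},\nabla\cdot(\cdot))_{\Th}$), then the squared $W_1'$-norm of the first piece is exactly the left-hand side of Lemma~\ref{lem:infsup1}, hence $\simeq\tfrac1{2\mu}\|\overline{p}_{h,1}\|_{\Th}^2$ (the ``$\lesssim$'' being Cauchy--Schwarz as above), and the squared $W_0'$-norm of the second piece is exactly the left-hand side of Lemma~\ref{lem:infsup2}, hence $\simeq\tfrac1{\tau h}\|\jmp{\overline{p}_{h,2}}\|_{\Eh}^2$ (the ``$\lesssim$'' being the integration-by-parts estimate above); so the infimum over the restricted class is $\simeq M(\overline{p}_h)^2$. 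The remaining, and main, difficulty is to show the unrestricted infimum is not essentially smaller: given a near-optimal decomposition $\ell=g_0+g_1$ with $g_i\in W_i'$, one must replace $g_0,g_1$ by functionals in the range of $\overline{q}_h\mapsto(\overline{q}_h,\nabla\cdot(\cdot))_{\Th}$ at a cost bounded simultaneously in $\|\cdot\|_{W_0'}$ and $\|\cdot\|_{W_1'}$, uniformly in $\mu,\lambda,\tau,h$. I would obtain this by dualizing a projection of $W$ onto $\{\linev_h\in\lineV_{h,0}^k:(\overline{q}_h,\nabla\cdot\bldv_h)_{\Th}=0\ \forall\,\overline{q}_h\in\overline{Q}_h\}$ that leaves the interior components $\bldv_h^o$ untouched (so $\|\nabla\cdot\bldv_h^o\|_{\Th}$ is preserved) and is bounded uniformly in $h$ in both $\|\cdot\|_{L^2(\Th)}$ and $|\!|\!|\cdot|\!|\!|_{1,h}$; such a projection exists by the local, hence scale-invariant, $L^2$- and $H^1$-stability of the hierarchical basis splitting of $\bldV_h^{k,\partial}$ into divergence-free facet bubbles and the lowest-order Raviart--Thomas functions \cite[Section 2.2.4]{Lehrenfeld10}. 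Applying this projection to a near-optimal decomposition and invoking Lemmas~\ref{lem:infsup1} and~\ref{lem:infsup2} closes the lower bound; the convexity of $\Omega$ (elliptic regularity) is used only inside Lemma~\ref{lem:infsup1}. I expect this uniformly bounded projection — equivalently, the handling of the ``cross terms'' that defeats the naive strategy of simply adding the test functions produced separately by Lemmas~\ref{lem:infsup1} and~\ref{lem:infsup2} — to be the principal obstacle, the rest being an assembly of the stated lemmas.
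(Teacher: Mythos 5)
Your overall route coincides with the paper's: you use Lemma~\ref{lem:Snorm} to rewrite $\langle\mathbf{S}'\overline{\matrixP},\overline{\matrixP}\rangle_2$ as $\tfrac1\lambda\|\overline p_h\|_{\Th}^2$ plus a dual norm of the divergence functional, you prove $\langle\mathbf{S}'\overline{\matrixP},\overline{\matrixP}\rangle_2\lesssim|\overline p_h|_\ast^2$ exactly as in part (b) of the paper's proof (arbitrary splitting, Cauchy--Schwarz for the $\overline p_{h,1}$ piece, element-wise integration by parts plus the trace inequality for the $\overline p_{h,2}$ piece), and you set up the lower bound through the identification $N(\overline p_h)^2=\|T\overline p_h\|_{(X_1\cap X_2)'}^2=\|T\overline p_h\|_{X_1'+X_2'}^2$ combined with Lemmas~\ref{lem:infsup1} and~\ref{lem:infsup2}. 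The upper bound and the reduction are correct and match the paper.

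The gap is in the lower bound. You correctly note that Lemmas~\ref{lem:infsup1}--\ref{lem:infsup2} only control the infimum over decompositions $T\overline p_h=g_1+g_2$ in which both $g_i$ lie in the range of $T$, and you then reduce the unrestricted case to the existence of a projection of $\lineV_{h,0}^k$ onto $\{\linev_h:(\overline q_h,\nabla\cdot\bldv_h)_{\Th}=0\ \forall\,\overline q_h\in\overline Q_h\}$ that is \emph{simultaneously} bounded, uniformly in $h$, in the $L^2$-norm and in $|\!|\!|\cdot|\!|\!|_{1,h}$ while preserving $\|\nabla\cdot\bldv_h^o\|_{\Th}$. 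You assert this projection exists by appeal to the local stability of the hierarchical basis splitting, but you neither construct it nor verify the two-norm bound; its complement is precisely a right inverse of the element-averaged divergence bounded in both velocity norms at once, which is essentially a restatement of the inf-sup condition on $X_1\cap X_2$ that you are trying to prove. So the decisive step of the hard direction is left as an unproven claim. The paper closes this step without introducing any new operator: it reads Lemmas~\ref{lem:infsup1}--\ref{lem:infsup2} as boundedness of $T^{-1}$ from $(\mathrm{ran}\,T,\|\cdot\|_{X_1'})$ to $(\overline Q_h,\tfrac1{2\mu}\|\cdot\|_{\Th}^2)$ and from $(\mathrm{ran}\,T,\|\cdot\|_{X_2'})$ to $(\overline Q_h,\tfrac1{\tau h}\|\jmp{\cdot}\|_{\Eh}^2)$, and then invokes the operator-interpolation bound \eqref{mapBound} of Lemma~\ref{lemma:mapbound} to pass to the sum spaces, following \cite{Mardal04,Olshanskii06}. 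To complete your version you should either adopt that mechanism, or actually exhibit a single test function $\bld w_h$ per datum realizing the conclusions of Lemmas~\ref{lem:infsup1} and~\ref{lem:infsup2} simultaneously; as it stands, the lower bound is not established.
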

\begin{proof}
\begin{enumerate}[label=(\alph*),leftmargin = 15 pt]
\item
To prove $\langle\mathbf{S}'\overline{\matrixP},\overline{\matrixP}\rangle_2
\gtrsim
|\overline{p}_{h}|_\ast^2$, we denote two normed vector spaces:
\begin{alignat*}{2}
	{X}_1&=\lineV_{h,0}^k,\;\;\|\bldv_h\|_{{X}_1}^2 = 2\mu|\!|\!|\underline{\bldv}_h|\!|\!|_{1,h}^2+\lambda\|\nabla\cdot\bldv_h^o\|_{\Th}^2,\quad&&\forall\bldv_h\in {X}_1, \\
	{X}_2&=\bldV_{h,0}^k,\;\;\|\bldv_h\|_{{X}_2}^2 = \tau\|\bldv_h\|_{\Th}^2,\quad&&\forall\bldv_h\in{X}_2,
\end{alignat*}
and the mapping $T\in\pazocal{L}(\overline{Q}_h,X_1')\cap\pazocal{L}(\overline{Q}_h,X_2')$ such that\[(T\overline{p}_{h},\bldv_h)=(\overline{p}_{h},\nabla\cdot\bldv_h),\quad\forall\overline{p}_{h}\in\overline{Q}_h,\bldv_h\in \bldV_{h,0}^k.\] 
With the norm defined by $\mathbf{S}'$ in Lemma \ref{lem:Snorm}, the inf-sup conditions in Lemma \ref{lem:infsup1} and Lemma \ref{lem:infsup2}, the definition of sum and intersection Hilbert spaces in \eqref{interNorm} and \eqref{sumNorm}, linear mapping properties in Lemma \ref{lemma:mapbound}, we have the following result:
\begin{align*}
	\langle\mathbf{S}'\overline{\matrixP},\overline{\matrixP}\rangle_2
	&\simeq
	\frac{1}{\lambda}\|\overline{p}_{h}\|_{\Th}^2+
	\sup_{\linev_h\in\lineV_{h,0}^k}\frac{(\overline{p}_{h},\nabla\cdot\bldv_h)^2_{\Th}}{\tau\|\bldv_h\|_{\Th}^2+2\mu|\!|\!|\underline{\bldv}_h|\!|\!|_{1,h}^2+\lambda\|\nabla\cdot\bldv_h^o\|_{\Th}^2}
	\\
	&=
	\frac{1}{\lambda}\|\overline{p}_{h}\|_{\Th}^2+
	\|T\overline{p}_h\|_{(X_1\cap X_2)'} \\
	&=
	\frac{1}{\lambda}\|\overline{p}_{h}\|_{\Th}^2+
	\|T\overline{p}_h\|_{X_1'+X_2'} \\
	&\gtrsim
	|\overline{p}_h|_{\ast}^2.
\end{align*}

\item
To prove $\langle\mathbf{S}\overline{\matrixP},\overline{\matrixP}\rangle_2
\lesssim
|\overline{p}_{h}|_\ast^2$, 
we assume an arbitrary splitting $\overline{p}_{h}=\overline{p}_{h,1}+\overline{p}_{h,2}$. By using the norm defined by $\mathbf{S}'$ in Lemma \ref{lem:Snorm}, integration by parts, Cauchy Schwarz inequality, and inverse inequality, we have:
\begin{align*}
		\langle\mathbf{S}'\overline{\matrixP},\overline{\matrixP}\rangle_2
		&\simeq
		\frac{1}{\lambda}\|\overline{p}_{h}\|_{\Th}^2 + \sup_{\linev_h\in\lineV_{h,0}^k}\frac{(\overline{p}_{h},\nabla\cdot\bldv_h)^2_{\Th}}{\tau\|\bldv_h\|_{\Th}^2+2\mu|\!|\!|\underline{\bldv}_h|\!|\!|_{1,h}^2+\lambda\|\nabla\cdot\bldv_h^o\|_{\Th}^2} \\
		&\lesssim
		\frac{1}{\lambda}\|\overline{p}_{h}\|_{\Th}^2 + \sup_{\linev_h\in\lineV_{h,0}^k}\frac{(\overline{p}_{h,1},\nabla\cdot\bldv_h)^2_{\Th}+(\overline{p}_{h,2},\nabla\cdot\bldv_h)^2_{\Th}}{\tau\|\bldv_h\|_{\Th}^2+2\mu|\!|\!|\underline{\bldv}_h|\!|\!|_{1,h}^2} \\
		&=
		\frac{1}{\lambda}\|\overline{p}_{h}\|_{\Th}^2 + \sup_{\linev_h\in\lineV_{h,0}^k}\frac{(\overline{p}_{h,1},\nabla\cdot\bldv_h)^2_{\Th}+\langle\jmp{\overline{p}_{h,2}},\bldv_h\bld{n}\rangle^2_{\Eh}}{\tau\|\bldv_h\|_{\Th}^2+2\mu|\!|\!|\underline{\bldv}_h|\!|\!|_{1,h}^2} \\
		&\lesssim
		\frac{1}{\lambda}\|\overline{p}_{h}\|_{\Th}^2 + \sup_{\linev_h\in\lineV_{h,0}^k}\frac{(\frac{1}{2\mu}\|\overline{p}_{h,1}\|_{\Th}^2+\frac{1}{\tau h}\|\jmp{\overline{p}_{h,2}}\|_{\Eh}^2)(2\mu\|\nabla\cdot\bldv_h\|^2_{\Th}+\tau\|\bldv_h\|^2_{\Th})}{\tau\|\bldv_h\|_{\Th}^2+2\mu|\!|\!|\underline{\bldv}_h|\!|\!|_{1,h}^2} \\
		&\leq
		\frac{1}{\lambda}\|\overline{p}_{h}\|_{\Th}^2 +\frac{1}{2\mu}\|\overline{p}_{h,1}\|_{\Th}^2+\frac{1}{\tau h}\|\jmp{\overline{p}_{h,2}}\|_{\Eh}^2.
\end{align*}
Since the splitting of $\overline{p}_{h}$ is arbitrary, we always have $\langle\mathbf{S}'\overline{\matrixP},\overline{\matrixP}\rangle_2
\lesssim
|\overline{p}_{h}|_\ast^2,\;\forall\overline{p}_{h}\in\overline{Q}_h$ and this completes the proof.
\end{enumerate}
\end{proof}

Next, we present the matrix representation of the newly defined norm $|\overline{p}_h|_\ast$ following the similar analysis procedure as in \cite[Theorem 2.5]{Olshanskii06}. It naturally follows from Theorem \ref{theo:SchurEquivNorm} that this matrix representation is spectrally equivalent to the Schur complement $\mathbf{S}'$ and its inverse is a robust Schur complement preconditioner with respect to model parameters and mesh size. We define matrix $\mathbf{M}$, $\mathbf{N}$ by \[\langle\mathbf{M}\overline{\matrixP},\overline{\matrixQ}\rangle_2=(\overline{p}_h,\overline{q}_h)_{\Th},\quad
\langle\mathbf{N}\overline{\matrixP},\overline{\matrixQ}\rangle_2=\frac{1}{h}\langle\jmp{\overline{p}_h},\jmp{\overline{q}_h}\rangle_{\Eh},\] for all $\overline{p}_h,\overline{q}_h\in\overline{Q}_h$ and we have the following result:
\begin{theorem}[Preconditioner for Schur Complement]
	\label{theo:SchurPre}
	Define the SPD matrix operator $\widetilde{\mathbf{S}'}$ on $\overline{Q}_h$ by $\widetilde{\mathbf{S}'}:=\frac{1}{\lambda}\mathbf{M}+\mathbf{M}(\tau\mathbf{M}+2\mu\mathbf{N})^{-1}\mathbf{N}$. We have:
	{
	\[
	|\overline{p}_h|_\ast^2
	=
	\langle\widetilde{\mathbf{S}'}\overline{\matrixP}_h,\overline{\matrixP}_h\rangle_2,
	\quad\forall\overline{p}_h\in\overline{Q}_h,
	\]
	and
	\[
	\kappa(\widetilde{\mathbf{S}'}^{-1}\mathbf{S}')\simeq 1.
	\]
	}
\end{theorem}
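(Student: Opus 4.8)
The plan is to establish the identity $|\overline p_h|_\ast^2=\langle\widetilde{\mathbf S'}\overline{\matrixP}_h,\overline{\matrixP}_h\rangle_2$ first and then read off the condition-number bound, which follows from it and Theorem~\ref{theo:SchurEquivNorm} with essentially no extra work: both $\widetilde{\mathbf S'}$ and $\mathbf S'$ are SPD, and combining the identity with Theorem~\ref{theo:SchurEquivNorm} gives $\langle\mathbf S'\overline{\matrixP},\overline{\matrixP}\rangle_2\simeq|\overline p_h|_\ast^2=\langle\widetilde{\mathbf S'}\overline{\matrixP},\overline{\matrixP}\rangle_2$ with constants depending only on shape regularity and the polynomial degree; a two-sided bound of one SPD form by another is exactly a bound on the generalized eigenvalues of the pencil $(\mathbf S',\widetilde{\mathbf S'})$, hence $\kappa(\widetilde{\mathbf S'}^{-1}\mathbf S')\simeq 1$. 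So the work is entirely in evaluating the infimum in \eqref{pbarNorm} in closed form.

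To do that I would pass to coefficient vectors. Writing $\overline{\matrixP}_1,\overline{\matrixP}_2$ for the coefficient vectors of $\overline p_{h,1},\overline p_{h,2}$, the definitions of $\mathbf M$ and $\mathbf N$ turn the objective into $\tfrac1{2\mu}\langle\mathbf M\overline{\matrixP}_1,\overline{\matrixP}_1\rangle_2+\tfrac1\tau\langle\mathbf N\overline{\matrixP}_2,\overline{\matrixP}_2\rangle_2$ and the constraint $\overline p_h=\overline p_{h,1}+\overline p_{h,2}$ into $\overline{\matrixP}_2=\overline{\matrixP}-\overline{\matrixP}_1$, so the infimum becomes the unconstrained minimization
\[
\min_{\overline{\matrixP}_1}\Big(\tfrac1{2\mu}\langle\mathbf M\overline{\matrixP}_1,\overline{\matrixP}_1\rangle_2+\tfrac1\tau\langle\mathbf N(\overline{\matrixP}-\overline{\matrixP}_1),\overline{\matrixP}-\overline{\matrixP}_1\rangle_2\Big).
\]
Its Hessian in $\overline{\matrixP}_1$ is $\tfrac1{2\mu}\mathbf M+\tfrac1\tau\mathbf N$, which is SPD since $\mathbf M$ is SPD, $\mathbf N$ is positive semidefinite and $\mu,\tau>0$; hence the quadratic is coercive, the minimizer exists and is unique, and it solves the normal equation $\big(\tfrac1{2\mu}\mathbf M+\tfrac1\tau\mathbf N\big)\overline{\matrixP}_1=\tfrac1\tau\mathbf N\overline{\matrixP}$. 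Substituting it back and simplifying — using the identity $\big(\tfrac1{2\mu}\mathbf M+\tfrac1\tau\mathbf N\big)^{-1}=2\mu\tau(\tau\mathbf M+2\mu\mathbf N)^{-1}$ to pull the scalars through the inverse, and that the minimum of $\mathbf x^{\trans}\mathbf C\mathbf x-2\mathbf x^{\trans}\mathbf b+c$ over $\mathbf x$ equals $c-\mathbf b^{\trans}\mathbf C^{-1}\mathbf b$ — the minimum collapses to $\langle\mathbf M(\tau\mathbf M+2\mu\mathbf N)^{-1}\mathbf N\,\overline{\matrixP},\overline{\matrixP}\rangle_2$ (the product acts symmetrically as a quadratic form, since one may transpose the scalar). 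Adding the remaining term $\tfrac1\lambda\|\overline p_h\|_{\Th}^2=\tfrac1\lambda\langle\mathbf M\overline{\matrixP},\overline{\matrixP}\rangle_2$ produces exactly $\langle\widetilde{\mathbf S'}\overline{\matrixP}_h,\overline{\matrixP}_h\rangle_2$, giving the identity.

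The step I expect to require the most care is the bookkeeping around the fact that $\mathbf N$ is only positive semidefinite — it is singular, since globally constant pressures carry no jumps — so one cannot invert it on its own; the computation must route everything through $\tau\mathbf M+2\mu\mathbf N$, which is genuinely SPD, and the positive semidefiniteness of the second summand $\mathbf M(\tau\mathbf M+2\mu\mathbf N)^{-1}\mathbf N$ of $\widetilde{\mathbf S'}$ is most cleanly obtained by reading it off as the (nonnegative) value of the minimization rather than from the matrix product. This also tacitly uses $\tau>0$, consistent with the $\tfrac1{\tau h}$ appearing in \eqref{pbarNorm}. Everything else — existence and uniqueness of the minimizer, and the passage from the norm equivalence of Theorem~\ref{theo:SchurEquivNorm} to $\kappa\simeq1$ — is routine.
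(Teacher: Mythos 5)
Your proposal is correct and follows essentially the same route as the paper: the infimum in \eqref{pbarNorm} is evaluated by solving the normal equation for the optimal splitting (the paper phrases this as the variational optimality condition $\bigl(\tfrac{1}{2\mu}\overline{p}_{h,1},\overline{q}_h\bigr)_{\Th}+\tfrac{1}{\tau h}\langle\jmp{\overline{p}_{h,1}-\overline{p}_h},\jmp{\overline{q}_h}\rangle_{\Eh}=0$, which is exactly your matrix equation $\bigl(\tfrac{1}{2\mu}\mathbf{M}+\tfrac{1}{\tau}\mathbf{N}\bigr)\overline{\matrixP}_1=\tfrac{1}{\tau}\mathbf{N}\overline{\matrixP}$), substituting back to obtain $\widetilde{\mathbf{S}'}$, and then deducing $\kappa(\widetilde{\mathbf{S}'}^{-1}\mathbf{S}')\simeq 1$ directly from Theorem \ref{theo:SchurEquivNorm}. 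Your explicit attention to the semidefiniteness of $\mathbf{N}$ and the implicit requirement $\tau>0$ is a sound observation but does not change the argument.
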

\begin{proof}
	In the definition of the norm $|\overline{p}_h|_{\ast}$ \eqref{pbarNorm}, by variational analysis the infimum is achieved when \[(\frac{1}{2\mu}\overline{p}_{h,1},\overline{q}_h)_{\Th}+\frac{1}{\tau h}\langle\jmp{\overline{p}_{h,1}-\overline{p}_h},\jmp{\overline{q}_h}\rangle_{\Eh}=0,\quad\forall\overline{q}_h\in\overline{Q}_h.\]
	By reformulating it into the matrix formulation, we get:
	\begin{align*}
		\left\langle\frac{1}{2\mu}\mathbf{M}\overline{\mathbf{P}}_{1}+\frac{1}{\tau}\mathbf{N}(\overline{\mathbf{P}}_1-\overline{\matrixP}),\overline{\matrixQ}\right\rangle_2
		=0, \\
		\overline{\matrixP}_1
		=
		\left(\frac{\tau}{2\mu}\mathbf{M}+\mathbf{N}\right)^{-1}\mathbf{N}\overline{\matrixP}.
	\end{align*}
	Therefore, combined with the fact that $(\frac{1}{2\mu}\overline{p}_{h,1},\overline{p}_{h,1}-\overline{p})_{\Th}+\frac{1}{\tau h}\langle\jmp{\overline{p}_{h,1}-\overline{p}_h},\jmp{\overline{p}_{h,1}-\overline{p}}\rangle_{\Eh}=0$, $|\overline{p}_h|_{\ast}^2$ can be explicitly expressed as
	\begin{align*}
		|\overline{p}_h|_{\ast}^2
		&=
		\frac{1}{\lambda}\|\overline{p}_h\|_{\Th}^2+\frac{1}{2\mu}\|\overline{p}_{h,1}\|_{\Th}^2+\frac{1}{\tau h}\|\jmp{\overline{p}_h-\overline{p}_{h,1}}\|_{\Eh}^2 \\
		&=
		\frac{1}{\lambda}(\overline{p}_h,\overline{p}_h)_{\Th}+\frac{1}{2\mu}(\overline{p}_h,\overline{p}_{h,1})_{\Th} \\
		&=
		\frac{1}{\lambda}\langle\mathbf{M}\overline{\mathbf{P}},\overline{\mathbf{P}}\rangle_2+\frac{1}{2\mu}\left\langle\mathbf{M}\left(\frac{\tau}{2\mu}\mathbf{M}+\mathbf{N}\right)^{-1}\mathbf{N}\overline{\mathbf{P}},\overline{\mathbf{P}}\right\rangle_2 \\
		&=
		\langle\widetilde{\mathbf{S}'}\overline{\matrixP},\overline{\matrixP}\rangle_2,
	\end{align*}
	where
	\begin{align*} 
		\widetilde{\mathbf{S}'}&=
		\frac{1}{\lambda}\mathbf{M}+\frac{1}{2\mu}\mathbf{M}\left(\frac{\tau}{2\mu}\mathbf{M}+\mathbf{N}\right)^{-1}\mathbf{N} \\
		&=
		\frac{1}{\lambda}\mathbf{M}+\mathbf{M}(\tau\mathbf{M}+2\mu\mathbf{N})^{-1}\mathbf{N}.
	\end{align*}
	{Then $\kappa(\widetilde{\mathbf{S}'}^{-1}\mathbf{S}')\simeq 1$ directly follows Theorem \ref{theo:SchurEquivNorm}}.
\end{proof}
}

\revise{
	Finally, we present an explicit and computable expression for $\widetilde{\mathbf{S}'}^{-1}$ which has a remarkably simple structure. At first glance, it seems quite difficult to exactly compute $\widetilde{\mathbf{S}'}^{-1}$ due to its complicated form when the parameters $\frac{1}{\lambda}$, $\mu$ and $\tau$ are not zero. This issue was partially addressed in our previous work\cite{fu2022monolithic} using an approximation to matrix inversion, where
	\begin{align}
		\label{LEpracS}
		\widetilde{\mathbf{S}'}^{-1} \approx
		\frac{2\mu\lambda}{2\mu+\lambda}(\mathbf{M})^{-1}+\tau(\frac{\tau}{\lambda}\mathbf{M}+\mathbf{N})^{-1}.
	\end{align}
	We note that a similar form to \eqref{LEpracS} can also be found in a recent paper\cite{olshanskii2022recycling} to precondition the Schur complement of a penalized surface incompressible fluid problem. Note that \eqref{LEpracS} is an exact inverse of $\widetilde{\mathbf{S}'}$ only if either of $\frac{1}{\lambda}$, $\tau$, or $\mu$ is zero, but not exact for the general case where $\frac{1}{\lambda}$, $\tau$, and $\mu$ are all not zero. 
	Here, we obtain a simple expression of the exact inverse of $\widetilde{\mathbf{S}'}$, which has a similar form as \eqref{LEpracS}, by exploring the structure of the matrix using the Woodbury matrix identity\cite{higham2002accuracy}.
	\begin{theorem}[Exact inverse of $\widetilde{\mathbf{S}'}$]
		\begin{align}
			\label{exactSinv}
			\widetilde{\mathbf{S}'}^{-1} = 
			\frac{2\mu\lambda}{2\mu + \lambda}\mathbf{M}^{-1}
			+ \tau\left(\frac{\lambda}{2\mu + \lambda}\right)^2\left(
				\frac{\tau}{2\mu + \lambda}\mathbf{M} + \mathbf{N}
			\right)^{-1}.
		\end{align}
	\end{theorem}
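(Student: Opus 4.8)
The plan is to massage $\widetilde{\mathbf{S}'}$ into a form $\mathbf{A}+\mathbf{U}\mathbf{C}\mathbf{V}$ with $\mathbf{A}$ and $\mathbf{C}$ easily invertible, and then apply the Woodbury matrix identity, tracking the scalar coefficients to the end. As a preliminary simplification, I would use $2\mu\mathbf{N}=(\tau\mathbf{M}+2\mu\mathbf{N})-\tau\mathbf{M}$ to rewrite the coupling term as
\begin{align*}
\mathbf{M}(\tau\mathbf{M}+2\mu\mathbf{N})^{-1}\mathbf{N}
=\frac{1}{2\mu}\mathbf{M}-\frac{\tau}{2\mu}\mathbf{M}(\tau\mathbf{M}+2\mu\mathbf{N})^{-1}\mathbf{M},
\end{align*}
so that, with $\frac{1}{\lambda}+\frac{1}{2\mu}=\frac{2\mu+\lambda}{2\mu\lambda}$,
\begin{align*}
\widetilde{\mathbf{S}'}=\frac{2\mu+\lambda}{2\mu\lambda}\,\mathbf{M}-\frac{\tau}{2\mu}\,\mathbf{M}(\tau\mathbf{M}+2\mu\mathbf{N})^{-1}\mathbf{M}.
\end{align*}

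Next I would apply the Woodbury identity $(\mathbf{A}+\mathbf{U}\mathbf{C}\mathbf{V})^{-1}=\mathbf{A}^{-1}-\mathbf{A}^{-1}\mathbf{U}(\mathbf{C}^{-1}+\mathbf{V}\mathbf{A}^{-1}\mathbf{U})^{-1}\mathbf{V}\mathbf{A}^{-1}$ with the choices $\mathbf{A}:=\frac{2\mu+\lambda}{2\mu\lambda}\mathbf{M}$, $\mathbf{U}=\mathbf{V}:=\mathbf{M}$, and $\mathbf{C}:=-\frac{\tau}{2\mu}(\tau\mathbf{M}+2\mu\mathbf{N})^{-1}$. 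Since $\mathbf{M}$ is SPD and $\tau\mathbf{M}+2\mu\mathbf{N}$ is invertible on the relevant pressure space (it is SPD there for $\tau>0$, and reduces to the SPD matrix $2\mu\mathbf{N}$ when $\tau=0$), both $\mathbf{A}$ and $\mathbf{C}$ are invertible and the identity applies. Using $\mathbf{A}^{-1}=\frac{2\mu\lambda}{2\mu+\lambda}\mathbf{M}^{-1}$ one gets $\mathbf{A}^{-1}\mathbf{U}=\mathbf{V}\mathbf{A}^{-1}=\frac{2\mu\lambda}{2\mu+\lambda}\mathbf{I}$, $\mathbf{V}\mathbf{A}^{-1}\mathbf{U}=\frac{2\mu\lambda}{2\mu+\lambda}\mathbf{M}$, and $\mathbf{C}^{-1}=-\frac{2\mu}{\tau}(\tau\mathbf{M}+2\mu\mathbf{N})$.

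The heart of the computation is the capacitance matrix: using $\frac{2\mu\lambda}{2\mu+\lambda}-2\mu=-\frac{4\mu^2}{2\mu+\lambda}$,
\begin{align*}
\mathbf{C}^{-1}+\mathbf{V}\mathbf{A}^{-1}\mathbf{U}
=-2\mu\mathbf{M}-\frac{4\mu^2}{\tau}\mathbf{N}+\frac{2\mu\lambda}{2\mu+\lambda}\mathbf{M}
=-\frac{4\mu^2}{\tau}\left(\frac{\tau}{2\mu+\lambda}\mathbf{M}+\mathbf{N}\right),
\end{align*}
which is manifestly invertible since $\frac{\tau}{2\mu+\lambda}\mathbf{M}+\mathbf{N}$ is SPD. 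Substituting back, the scalar prefactors combine as $\frac{2\mu\lambda}{2\mu+\lambda}\cdot\frac{\tau}{4\mu^2}\cdot\frac{2\mu\lambda}{2\mu+\lambda}=\tau\big(\frac{\lambda}{2\mu+\lambda}\big)^2$, giving exactly \eqref{exactSinv}. I expect the only genuinely delicate point to be this invertibility bookkeeping (making sure Woodbury is legitimate), together with the sign/coefficient cancellation in the capacitance matrix; everything else is routine scalar algebra. As consistency checks I would note that the formula degenerates correctly: the $\tau=0$ case gives $\widetilde{\mathbf{S}'}^{-1}=\frac{2\mu\lambda}{2\mu+\lambda}\mathbf{M}^{-1}$, matching $\widetilde{\mathbf{S}'}=\frac{2\mu+\lambda}{2\mu\lambda}\mathbf{M}$ directly, and the Stokes limit $\lambda\to\infty$ recovers $2\mu\mathbf{M}^{-1}+\tau\mathbf{N}^{-1}$, consistent with \eqref{LEpracS}.
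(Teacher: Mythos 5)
Your proposal is correct and follows essentially the same route as the paper: rewrite $\widetilde{\mathbf{S}'}=\frac{2\mu+\lambda}{2\mu\lambda}\mathbf{M}-\frac{\tau}{2\mu}\mathbf{M}(\tau\mathbf{M}+2\mu\mathbf{N})^{-1}\mathbf{M}$ and apply the Woodbury identity; your placement of the scalar $-\tau/(2\mu)$ in $\mathbf{C}$ rather than $\mathbf{U}$ is immaterial, and your explicit evaluation of the capacitance matrix and scalar prefactors (which the paper leaves to the reader) checks out.
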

	\begin{proof}
		By algebraic manipulation, we have:
		\begin{align*}
			\widetilde{\mathbf{S}'}
			=&\frac{1}{\lambda}\mathbf{M}+\mathbf{M}\left(\tau\mathbf{M}+2\mu\mathbf{N}\right)^{-1}\mathbf{N}
			\\
			=&\frac{1}{\lambda}\mathbf{M}+\mathbf{M}\left(\tau\mathbf{M}+2\mu\mathbf{N}\right)^{-1}\frac{\tau\mathbf{M} + 2\mu\mathbf{N} - \tau\mathbf{M}}{2\mu}
			\\
			=&
			\frac{2\mu + \lambda}{2\mu\lambda}\mathbf{M} - \frac{\tau}{2\mu}\mathbf{M}\left(\tau\mathbf{M} + 2\mu\mathbf{N}\right)^{-1}\mathbf{M}.
		\end{align*}
		We quote Woodbury matrix identity\cite{higham2002accuracy} to get the inverses of sum matrices, which states:
		\begin{align*}
			\left(\mathbf{A} + \mathbf{U}\mathbf{C}\mathbf{V}\right)^{-1}
			= \mathbf{A}^{-1} - \mathbf{A}^{-1}\mathbf{U}\left(
				\mathbf{C}^{-1} + \mathbf{V}\mathbf{A}^{-1}\mathbf{U}
			\right)^{-1}\mathbf{V}\mathbf{A}^{-1},
		\end{align*}
		and the result follows by plugging into the above equation $\mathbf{A} = \frac{2\mu + \lambda}{2\mu\lambda}\mathbf{M}$, $\mathbf{U} = - \frac{\tau}{2\mu}\mathbf{M}$, $\mathbf{C} = \left(\tau\mathbf{M} + 2\mu\mathbf{N}\right)^{-1}$, and $\mathbf{V}= \mathbf{M}$.
	\end{proof}
}

\section{Numerical Results}
\label{sec:num}
In this section, we present two-dimensional and three-dimensional numerical experiments to verify the robustness of the proposed block-diagonal preconditioner \revise{$\mathrm{diag} [	\widetilde{\matrixA}_g^{-1},\;
\widetilde{\mathbf{S}'}^{-1}]$.
Here the ASP preconditioner $\widetilde{\matrixA}_g^{-1}$ is given in \eqref{asp} with $\mathbf{R}_g$ being the vertex-patch based block symmetric Gauss-Seidel smoother, and
$\bld B_0$ the approximate inverse of 
$\bld A_0$ using hypre's BoomerAMG \cite{Henson02},  and 
the Schur complement preconditioner 
$\widetilde{\mathbf{S}'}^{-1}$ is given in \eqref{exactSinv}, where the matrix inverses are again replaced by hypre's BoomerAMG approximations.}
 The first and second examples are based on the generalized Stokes equations, \revise{and the third and fourth examples are based on the steady and unsteady linear elasticity equations.} All results are obtained by using the NGSolve software \cite{Schoberl16}. All codes are available at \url{https://github.com/WZKuang/pc-hdg-saddle}.

The MINRES solver with relative tolerance of $10^{-8}$ is used to solve the condensed $H$(div)-conforming HDG scheme \eqref{condensedM}, starting with a random vector to ensure that the initial error is not smooth. 

\begin{table}[ht]
	\centering 
	{%
		\begin{tabular}{c||c|ccc|c|ccc}
			\toprule[1pt]
			& \multicolumn{4}{c|}{$\text{2D, }\mu=1$}
			& \multicolumn{4}{c}{$\text{3D, }\mu=1$}
			\\
			\midrule
			$k$
			& $\frac{1}{h}$  
			& $\tau=0$ & $\tau=1$ &$\tau=10^2$
			& $\frac{1}{h}$  
			& $\tau=0$ & $\tau=1$ &$\tau=10^2$
			\\
			\midrule
			\multirow{4}{2mm}{2} 
			&8& 57& 60& 54&4 & 71& 71&61
			\\
			&16& 58& 60& 56&8 & 76& 78&72
			\\
			&32& 57& 61& 57&12 & 79& 81&74
			\\
			&64& 58& 61& 58&16 & 77& 81&76
			\\
			\midrule
			\multirow{4}{2mm}{3} 
			&8& 60& 63& 57&4 & 74& 77&67
			\\
			&16& 61& 66& 60&8 & 77& 83&75
			\\
			&32& 61& 64& 61&12 & 77& 83&77
			\\
			&64& 61& 64& 61&16 & 77& 81&78
			\\
			\midrule
			\multirow{4}{2mm}{4} 
			&8& 64& 66& 59&4 & 80& 83&71
			\\
			&16& 65& 67& 62&8 & 82& 86&78
			\\
			&32& 63& 67& 63&12 & 80& 88&82
			\\
			&64& 62& 67& 64&16 & 80& 88&82
			\\
			\bottomrule[1pt]
	\end{tabular}}
	\vspace{2ex}  
	\caption{\it \textbf{Generalized Stokes in lid-driven cavity model.} 
		Iteration counts for the preconditioned MINRES solver.} 
	\label{table:i1} 
\end{table}

\begin{table}[ht]
	\centering 
	{%
		\begin{tabular}{c||c|ccc|c|ccc}
			\toprule[1pt]
			& \multicolumn{4}{c|}{$\text{2D, }\mu=1$}
			& \multicolumn{4}{c}{$\text{3D, }\mu=1$}
			\\
			\midrule
			$k$
			& $\frac{1}{h}$  
			& $\tau=0$ & $\tau=1$ &$\tau=10^2$
			& $\frac{1}{h}$  
			& $\tau=0$ & $\tau=1$ &$\tau=10^2$
			\\
			\midrule
			\multirow{4}{2mm}{2} 
			&8& 87& 80& 59&4 &135 &117 &81
			\\
			&16& 94& 84& 60&8 &159 &145 &109
			\\
			&32& 96& 87& 63&12 &164 &145 &110
			\\
			&64& 97& 87& 63&16 &158 &139 &108
			\\
			\midrule
			\multirow{4}{2mm}{3} 
			&8& 92& 83& 59&4 & 153& 131&90
			\\
			&16& 98& 91& 64&8 & 172& 153&112
			\\
			&32& 98& 90& 66&12 & 181& 157&116
			
			\\
			&64& 96& 88& 66&16 & 168& 151&108
			\\
			\midrule
			\multirow{4}{2mm}{4} 
			&8& 94& 86& 62&4 &161 & 141&94
			\\
			&16& 101& 92& 64&8 &186 & 164&115
			\\
			&32& 100& 92& 67&12 &193 & 166&119
			
			\\
			&64& 100& 91& 68&16 & 185& 153&114
			\\
			\bottomrule[1pt]
	\end{tabular}}
	\vspace{2ex}  
	\caption{\it \textbf{Generalized Stokes in backward-facing step flow model.} 
		Iteration counts for the preconditioned MINRES solver.} 
	\label{table:i2} 
\end{table}

\subsection{\revise{The generalized Stokes equations}}
\revise{The generalized Stokes problem fits into the general setting \eqref{model} with $\lambda=+\infty$ and it is easily verified that $\widetilde{\mathbf{S}'}^{-1}=2\mu\mathbf{M}^{-1}+\tau\mathbf{N}^{-1}$.} We choose the model problems of the lid-driven cavity and the backward-facing step flow as in \cite{farrell2019augmented}.

For the lid-driven cavity, we take the domain to be unit square/cube $\Omega_c=[0,1]^d$, where $d$ is the space dimension. An inhomogeneous Dirichlet boundary condition $\bld{u}=[4x(1-x),0]^\trans$ in 2D or $\bld{u}=[16x(1-x)y(1-y),0,0]^\trans$ in 3D is set on the top side, with no-slip boundary condition for all other sides. 
For the backward-facing step flow, we choose the domain $\Omega_b=([0.5,4]\times[0,0.5])\cup([0,4]\times[0.5,1])$ in 2D or $\Omega_b=\left(([0.5,4]\times[0,0.5])\cup([0,4]\times[0.5,1])\right)\times[0,1]$ in 3D. An inhomogeneous boundary condition $\bld{u}=[16(1-y)(y-0.5),0]^\trans$ in 2D or $\bld{u}=[64(1-y)(y-0.5)z(1-z),0,0]^\trans$ in 3D is set for the inlet velocity on $\{x=0\}$, with do-nothing boundary condition on $\{x=4\}$ and no-slip boundary condition on the remaining sides. 

In both model problems, the domains are divided into uniform simplicial meshes with mesh size $h_{start}$ followed by three-level refinement. $h_{start}$ is chosen to be 8 in 2D numerical experiments and 4 in 3D cases due to the limit of computation capability. The source function $\bld{f}$ is set to be $\bld{0}$. The value of $\mu$ is fixed to be 1 and $\tau$ is chosen from \{0,1,100\}. The change of iteration counts with the increase of polynomial degrees is also examined, with $k\in\{2,3,4\}$. The iteration counts of lid-driven cavity model are recorded in Table \ref{table:i1}, and iteration counts of backward-facing step flow model are in Table \ref{table:i2}.

As observed from Table \ref{table:i1}, the iteration counts of both two-dimensional and three-dimensional lid-driven cavity model problems are independent of mesh size $h$ for a fixed polynomial order $k$ and reaction parameter $\tau$. The iteration counts are also robust with respect to the value of $\tau$, decreasing as $\tau$ increases from 0 to 100, which is expected considering the velocity block becomes more similar to a mass matrix as $\tau$ increases. 
 
Moreover, we find the iteration count only increases very mildly as the polynomial degree increases from $k=2$ to $k=4$. 
It also needs to be noted that the iteration counts in three-dimensional cases are higher when compared to the 
two-dimensional counterparts. 
The results from Table \ref{table:i2} are similar to those from Table \ref{table:i1}, where it needs to be noted that the iteration counts in the backward-facing step flow problem are higher than those in the lid-driven cavity problem when other parameters are the same. Therefore, the iteration counts of our preconditioner are dependent on the shape of the domain, or more specifically aspect ratio of the domain.

\subsection{\revise{The steady and unsteady linear elasticity}}
\revise{
	For the steady and unsteady linear elasticity equations, we use the same domain and boundary conditions of the lid-driven cavity model problem in the generalized Stokes equations. The source term $\bld{f}$ is again set to be $\bld{0}$.
}

\revise{
	The steady linear elasticity equations fit into \eqref{model} with $\tau=0$, and the corresponding Schur complement preconditioner is $\widetilde{\mathbf{S}'}^{-1} = \frac{2\mu\lambda}{\lambda+2\mu}\mathbf{M}^{-1}$. The value of $\frac{1}{\lambda}$ is chosen from $\{10^{-4},10^{-1}, 1\}$, and all other settings are the same as the numerical experiments of the generalized Stokes equations. The iteration counts are recorded in Table \ref{table:i3}.
}	

\revise{
For the unsteady linear elasticity equations, we take $\mu=1$
and vary the value of $\tau$ from $\{10,\; 10^2,\; 10^3,\; 10^4\}$ and $\frac{1}{\lambda}$ from $\{10^{-4},\; 10^{-1},\; 1,\; 10\}$.
  The iteration counts 
  for $k=2$ with different mesh sizes are recorded in Table \ref{table:i4}, while those for $k=3$ are recorded in Table \ref{table:i5}.
}

\revise{
	Results from Table \ref{table:i3}--\ref{table:i5} verify the robustness of our 
 	the block-diagonal preconditioner with respect to mesh size and
 	model parameters,
 	with the nearly-incompressible cases taking more iterations than the compressible ones. 
}
\begin{table}[ht]
	\centering 
	{%
		\begin{tabular}{c||c|ccc|c|ccc}
			\toprule[1pt]
			& \multicolumn{4}{c|}{$\text{2D, }\mu=1$}
			& \multicolumn{4}{c}{$\text{3D, }\mu=1$}
			\\
			\midrule
			$k$
			& $\frac{1}{h}$  
			& $\frac{1}{\lambda}=10^{-4}$ & $\frac{1}{\lambda}=10^{-1}$ &$\frac{1}{\lambda}=1$
			& $\frac{1}{h}$  
			& $\frac{1}{\lambda}=10^{-4}$ & $\frac{1}{\lambda}=10^{-1}$ &$\frac{1}{\lambda}=1$
			\\
			\midrule
			\multirow{4}{2mm}{2} 
			&8& 89& 57& 38&4 & 111& 62&36
			\\
			&16& 90& 57& 38&8 & 80& 66&38
			\\
			&32& 61& 59& 38&12 & 81& 67&38
			\\
			&64& 61& 59& 37&16 & 81& 68&38
			\\
			\midrule
			\multirow{4}{2mm}{3} 
			&8& 96& 61& 39&4 & 119& 65&37
			\\
			&16& 73& 62& 39&8 & 83& 68&39
			\\
			&32& 66& 61& 39&12 & 84& 68&39
			\\
			&64& 66& 61& 39&16 & 82& 68&39
			\\
			\midrule
			\multirow{4}{2mm}{4} 
			&8& 91& 65& 44&4 & 125& 70& 39
			\\
			&16& 71& 66& 44&8 & 88& 72&42
			\\
			&32& 68& 66& 42&12 & 88& 72&41
			\\
			&64& 67& 66& 42&16 & 88& 72&41
			\\
			\bottomrule[1pt]
	\end{tabular}}
	\vspace{2ex}  
	\caption{\it \textbf{Steady linear elasticity in unit square/cube.} 
		Iteration counts for the preconditioned MINRES solver.} 
	\label{table:i3} 
\end{table}

\begin{table}[ht]
	{\revise{
    	\renewcommand{\arraystretch}{1.1} 
		\begin{tabular}{ccccccccccccc}
			\toprule[1pt]
			& \multicolumn{12}{c}{2D, $\mu = 1$} \\ \hline
		   \multicolumn{1}{c|}{} & \multicolumn{4}{c|}{$1/h = 32$} & \multicolumn{4}{c|}{$1/h = 64$} & \multicolumn{4}{c}{$1/h = 128$} \\ \hline
		   \multicolumn{1}{c|}{\multirow{2}{*}{$\tau$}} & \multicolumn{4}{c|}{$\lambda$} & \multicolumn{4}{c|}{$\lambda$} & \multicolumn{4}{c}{$\lambda$} \\
		   \multicolumn{1}{c|}{} & $10^{-4}$ & $10^{-1}$ & $1$ & \multicolumn{1}{c|}{$10$} & $10^{-4}$ & $10^{-1}$ & $1$ & \multicolumn{1}{c|}{$10$} & $10^{-4}$ & $10^{-1}$ & $1$ & $10$ \\ \hline
		   \multicolumn{1}{c|}{$10$} & 60 & 53 & 36 & \multicolumn{1}{c|}{29} & 60 & 53 & 36 & \multicolumn{1}{c|}{27} & 60 & 53 & 36 & 27 \\
		   \multicolumn{1}{c|}{$10^2$} & 59 & 52 & 38 & \multicolumn{1}{c|}{28} & 59 & 53 & 36 & \multicolumn{1}{c|}{27} & 60 & 53 & 36 & 27 \\
		   \multicolumn{1}{c|}{$10^3$} & 57 & 51 & 35 & \multicolumn{1}{c|}{27} & 58 & 54 & 37 & \multicolumn{1}{c|}{27} & 60 & 54 & 37 & 27 \\
		   \multicolumn{1}{c|}{$10^4$} & 50 & 42 & 30 & \multicolumn{1}{c|}{23} & 56 & 48 & 34 & \multicolumn{1}{c|}{26} & 59 & 51 & 36 & 27 \\ 
		   \midrule[1pt]\midrule[1pt]
			& \multicolumn{12}{c}{3D, $\mu = 1$} \\ \hline
		   \multicolumn{1}{c|}{} & \multicolumn{4}{c|}{$1/h = 8$} & \multicolumn{4}{c|}{$1/h = 12$} & \multicolumn{4}{c}{$1/h = 16$} \\ \hline
		   \multicolumn{1}{c|}{$\tau$} & \multicolumn{4}{c|}{$\lambda$} & \multicolumn{4}{c|}{$\lambda$} & \multicolumn{4}{c}{$\lambda$} \\
		   \multicolumn{1}{l|}{} & $10^{-4}$ & $10^{-1}$ & $1$ & \multicolumn{1}{c|}{$10$} & $10^{-4}$ & $10^{-1}$ & $1$ & \multicolumn{1}{c|}{$10$} & $10^{-4}$ & $10^{-1}$ & $1$ & $10$ \\ \hline
		   \multicolumn{1}{c|}{$10$} & 76 & 59 & 37 & \multicolumn{1}{c|}{28} & 79 & 60 & 38 & \multicolumn{1}{c|}{28} & 80 & 60 & 38 & 28 \\
		   \multicolumn{1}{c|}{$10^2$} & 72 & 56 & 34 & \multicolumn{1}{c|}{25} & 76 & 60 & 37 & \multicolumn{1}{c|}{26} & 77 & 60 & 38 & 26 \\
		   \multicolumn{1}{c|}{$10^3$} & 60 & 47 & 28 & \multicolumn{1}{c|}{21} & 66 & 51 & 31 & \multicolumn{1}{c|}{22} & 71 & 53 & 33 & 23 \\
		   \multicolumn{1}{c|}{$10^4$} & 49 & 29 & 18 & \multicolumn{1}{c|}{14} & 52 & 36 & 22 & \multicolumn{1}{c|}{16} & 56 & 40 & 24 & 17\\
		   \bottomrule[1pt]
		   \end{tabular}
		   \vspace{2ex}  
		   \caption{\it \textbf{Unsteady linear elasticity in unit square/cube.} 
		   Iteration counts for the preconditioned MINRES solver.
		   Polynomial degree $k=2$.
		   } 
	   	   \label{table:i4} 
	}}
\end{table}

\begin{table}[ht]
	{\revise{
    	\renewcommand{\arraystretch}{1.1} 
		\begin{tabular}{ccccccccccccc}
		    \toprule[1pt]
             & \multicolumn{12}{c}{2D, $\mu = 1$} \\ \hline
            \multicolumn{1}{c|}{} & \multicolumn{4}{c|}{$1/h = 32$} & \multicolumn{4}{c|}{$1/h = 64$} & \multicolumn{4}{c}{$1/h = 128$} \\ \hline
            \multicolumn{1}{c|}{\multirow{2}{*}{$\tau$}} & \multicolumn{4}{c|}{$\lambda$} & \multicolumn{4}{c|}{$\lambda$} & \multicolumn{4}{c}{$\lambda$} \\
            \multicolumn{1}{c|}{} & $10^{-4}$ & $10^{-1}$ & $1$ & \multicolumn{1}{c|}{$10$} & $10^{-4}$ & $10^{-1}$ & $1$ & \multicolumn{1}{c|}{$10$} & $10^{-4}$ & $10^{-1}$ & $1$ & $10$ \\ \hline
            \multicolumn{1}{c|}{$10$} & 63 & 55 & 37 & \multicolumn{1}{c|}{30} & 63 & 55 & 37 & \multicolumn{1}{c|}{30} & 65 & 55 & 37 & 28 \\
            \multicolumn{1}{c|}{$10^2$} & 62 & 54 & 39 & \multicolumn{1}{c|}{30} & 63 & 55 & 39 & \multicolumn{1}{c|}{30} & 63 & 55 & 39 & 28 \\
            \multicolumn{1}{c|}{$10^3$} & 62 & 53 & 36 & \multicolumn{1}{c|}{27} & 65 & 56 & 39 & \multicolumn{1}{c|}{28} & 64 & 56 & 39 & 28 \\
            \multicolumn{1}{c|}{$10^4$} & 53 & 45 & 32 & \multicolumn{1}{c|}{25} & 60 & 50 & 35 & \multicolumn{1}{c|}{27} & 62 & 53 & 36 & 27 \\
            \midrule[1pt]\midrule[1pt]
             & \multicolumn{12}{c}{3D, $\mu = 1$} \\ \hline
            \multicolumn{1}{c|}{} & \multicolumn{4}{c|}{$1/h = 8$} & \multicolumn{4}{c|}{$1/h = 12$} & \multicolumn{4}{c}{$1/h = 16$} \\ \hline
            \multicolumn{1}{c|}{$\tau$} & \multicolumn{4}{c|}{$\lambda$} & \multicolumn{4}{c|}{$\lambda$} & \multicolumn{4}{c}{$\lambda$} \\
            \multicolumn{1}{l|}{} & $10^{-4}$ & $10^{-1}$ & $1$ & \multicolumn{1}{c|}{$10$} & $10^{-4}$ & $10^{-1}$ & $1$ & \multicolumn{1}{c|}{$10$} & $10^{-4}$ & $10^{-1}$ & $1$ & $10$ \\ \hline
            \multicolumn{1}{c|}{$10$} & 79 & 60 & 38 & \multicolumn{1}{c|}{28} & 82 & 61 & 39 & \multicolumn{1}{c|}{29} & 82 & 61 & 39 & 27 \\
            \multicolumn{1}{c|}{$10^2$} & 75 & 58 & 37 & \multicolumn{1}{c|}{26} & 77 & 59 & 38 & \multicolumn{1}{c|}{27} & 80 & 60 & 38 & 27 \\
            \multicolumn{1}{c|}{$10^3$} & 69 & 50 & 31 & \multicolumn{1}{c|}{23} & 73 & 55 & 33 & \multicolumn{1}{c|}{25} & 76 & 56 & 35 & 23 \\
            \multicolumn{1}{c|}{$10^4$} & 53 & 30 & 19 & \multicolumn{1}{c|}{15} & 58 & 38 & 24 & \multicolumn{1}{c|}{18} & 62 & 44 & 27 & 19
            \\
            \bottomrule[1pt]
        \end{tabular}
		   \vspace{2ex}  
		   \caption{\it \textbf{Unsteady linear elasticity in unit square/cube.} 
		   Iteration counts for the preconditioned MINRES solver.
		   Polynomial degree $k=3$.
		   } 
	   	   \label{table:i5} 
	}}
\end{table}

\section{Conclusion}
\label{sec:conclude}
In this paper, we presented a robust block-diagonal preconditioner with respect to mesh size $h$ and model parameters for the condensed $H$(div)-conforming HDG schemes for the parameter-dependent saddle point problems, including the generalized Stokes equations and the linear elasticity equations. For the stiffness matrix, we extended from the optimal ASP for the $H$(div)-conforming HDG scheme for the reaction-diffusion equations, which was developed in our previous study. \revise{For the Schur complement, we obtained a general matrix formulation spectrally equivalent to the Schur complement in Theorem \ref{theo:SchurPre}, based on a newly defined parameter-dependent norm on the element-wise constant space. Then an explicit computable exact inverse is obtained via the Woodbury matrix identity.
Numerical results verify the robustness of the proposed block preconditioner in both two- and three-dimensions.}

\textbf{Acknowledgement:}
The authors would like to thank two anonymous reviewers for constructive criticism, which enables a better presentation of the material in this paper.

\bibliography{pc-hdg-saddle}
\bibliographystyle{siam}

\end{document}